\theoremstyle{plain}
\newtheorem{thm}{Theorem}
\newtheorem{lem}[thm]{Lemma}
\newtheorem{prop}[thm]{Proposition}
\newtheorem{remark}[thm]{Remark}
\theoremstyle{definition}
\newtheorem{definition}[thm]{Definition}
\newtheorem{exl}[thm]{Example}
\numberwithin{thm}{section}
\newcommand{\adj}{\leftrightarrow}
\newcommand{\adjeq}{\leftrightarroweq}
\def\Z{{\mathbb Z}}
\def\N{{\mathbb N}}
\def\R{{\mathbb R}}
\begin{document}
\title{AFPP and Unions of Convex Disks in the Digital Plane}
\author{Laurence Boxer
\thanks{
    Department of Computer and Information Sciences,
    Niagara University,
    Niagara University, NY 14109, USA;
    and Department of Computer Science and Engineering,
    State University of New York at Buffalo.
    email: boxer@niagara.edu
}
}

\date{ }
\maketitle{}

\begin{abstract}
We use results of~\cite{BxCvexAFPP} to enlarge our knowledge of the
approximate fixed point property (AFPP) for digital images in $\Z^2$.
In particular, we study conditions under which the union of two convex digital
disks has the AFPP.

Key words and phrases: digital topology, digital image, convex, approximate fixed point
\end{abstract}


\section{Introduction}
We quote from~\cite{BxCvexAFPP}.
\begin{quote}
The study of fixed points is prominent in many branches of mathematics. In digital topology,
it has become worthwhile to broaden the study to ``approximate fixed points." The Approximate
Fixed Point Property (AFPP), a generalization of the classical fixed point property (FPP),
was introduced in~\cite{BEKLL}.
\end{quote}
At the current writing, there is much to be learned about the AFPP, even for
digital images in the digital plane. In this paper,
we extend the work of~\cite{BxCvexAFPP} in showing how for digital images $X \subset \Z^2$, 
convexity can help us determine whether $(X,c_2)$ has the AFPP.

\section{Preliminaries}
Much of this section is quoted or paraphrased from papers that are listed in the
references, especially~\cite{BxAFPP,BxAFPPtreesProducts,BxConvexity,BEKLL}.

We use $\Z$ to indicate the set of integers; $\R$ for the set of real numbers.

\subsection{Adjacencies}
A digital image is a graph $(X,\kappa)$, where $X$ is a subset of $\Z^n$ for
some positive integer~$n$, and $\kappa$ is an adjacency relation for the points
of~$X$. The $c_u$-adjacencies are commonly used.
Let $x,y \in \Z^n$, $x \neq y$, where we consider these points as $n$-tuples of integers:
\[ x=(x_1,\ldots, x_n),~~~y=(y_1,\ldots,y_n).
\]
Let $u \in \Z$,
$1 \leq u \leq n$. We say $x$ and $y$ are 
{\em $c_u$-adjacent} if
\begin{itemize}
\item There are at most $u$ indices $i$ for which 
      $|x_i - y_i| = 1$.
\item For all indices $j$ such that $|x_j - y_j| \neq 1$ we
      have $x_j=y_j$.
\end{itemize}
Often, a $c_u$-adjacency is denoted by the number of points
adjacent to a given point in $\Z^n$ using this adjacency.
E.g.,
\begin{itemize}
\item In $\Z^1$, $c_1$-adjacency is 2-adjacency.
\item In $\Z^2$, $c_1$-adjacency is 4-adjacency and
      $c_2$-adjacency is 8-adjacency.
\item In $\Z^3$, $c_1$-adjacency is 6-adjacency,
      $c_2$-adjacency is 18-adjacency, and $c_3$-adjacency
      is 26-adjacency.
\end{itemize}

We write $x \adj_{\kappa} x'$, or $x \adj x'$ when $\kappa$ is understood, to indicate
that $x$ and $x'$ are $\kappa$-adjacent. Similarly, we
write $x \adjeq_{\kappa} x'$, or $x \adjeq x'$ when $\kappa$ is understood, to indicate
that $x$ and $x'$ are $\kappa$-adjacent or equal.

Let $(X,\kappa)$ be a digital image and $a,b \in X$. A {\em path from $a$ to $b$ in} $X$ is
a sequence $\{y_i\}_{i=0}^m \subset Y$ such that
$a=y_0$, $b=y_m$, and $y_i \adj_{\kappa} y_{i+1}$ for $0 \leq i < m$.

A subset $Y$ of a digital image $(X,\kappa)$ is
{\em $\kappa$-connected}~\cite{Rosenfeld},
or {\em connected} when $\kappa$
is understood, if for every pair of points $a,b \in Y$ there
exists a $\kappa$-path in $Y$ from $a$ to $b$. A maximal $\kappa$-connected subset of $X$ is a
{\em $\kappa$-component of} $X$.

Given a digital image $(X,\kappa)$ and $x \in X$, we denote by $N^*(X,\kappa,x)$ the set
$\{y \in X \, | \, y \adjeq_{\kappa} x\}$.

\begin{definition}
{\rm \cite{HanNon}}
\label{wedgeDef}
Given digital images $(X_1,\kappa) \subset \Z^n$ and $(X_2, \kappa) \subset \Z^n$, the
image $(X,\kappa)$ is a {\em wedge of $X_1$ and $X_2$}, denoted $X = X_1 \vee X_2$,
if $(X,\kappa)$ is isomorphic to $X_1 \cup X_2$ such that
\begin{itemize}
    \item $X_1 \cap X_2$ is a set with a single point, say, $x_0$, called the {\em wedge point};
          and
    \item if $x,x' \in X$ and $x \adj_{\kappa} x'$, then either $\{x,x'\} \subset X_1$ or
          $\{x,x'\} \subset X_2$.
\end{itemize}
\end{definition}

\subsection{Digitally continuous functions}
The following generalizes a definition of~\cite{Rosenfeld}.

\begin{definition}
\label{continuous}
{\rm ~\cite{Boxer99}}
Let $(X,\kappa)$ and $(Y,\lambda)$ be digital images. A single-valued function
$f: X \rightarrow Y$ is $(\kappa,\lambda)$-continuous if for
every $\kappa$-connected $A \subset X$ we have that
$f(A)$ is a $\lambda$-connected subset of $Y$. $\Box$
\end{definition}

When the adjacency relations are understood, we will simply say that $f$ is \emph{continuous}. Continuity can be expressed in terms of adjacency of points:
\begin{thm}
{\rm ~\cite{Rosenfeld,Boxer99}}
\label{localContinuity}
A function $f:X\to Y$ is continuous if and only if $x \adj x'$ in $X$ implies 
$f(x) \adjeq f(x')$. \qed
\end{thm}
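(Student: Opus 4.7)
The plan is to prove both directions directly from the definitions, treating the implication ``$x \adj x'$ implies $f(x) \adjeq f(x')$'' as a local version of the connectedness-preserving condition in Definition~\ref{continuous}.

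For the forward direction, I would assume $f$ is $(\kappa,\lambda)$-continuous and take any pair $x \adj_{\kappa} x'$ in $X$. The two-point set $\{x,x'\}$ is $\kappa$-connected (the sequence $x,x'$ is itself a $\kappa$-path). By continuity, $f(\{x,x'\}) = \{f(x),f(x')\}$ is $\lambda$-connected. Since this image has at most two elements, connectedness forces either $f(x)=f(x')$ or $f(x) \adj_{\lambda} f(x')$, and in either case $f(x) \adjeq_{\lambda} f(x')$. This direction is essentially immediate.

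For the reverse direction, I would assume the pointwise condition and let $A \subset X$ be $\kappa$-connected. To show $f(A)$ is $\lambda$-connected, pick arbitrary $p,q \in f(A)$ and choose preimages $a,b \in A$ with $f(a)=p$, $f(b)=q$. Connectedness of $A$ supplies a $\kappa$-path $a = y_0, y_1, \ldots, y_m = b$ in $A$, and applying the hypothesis to consecutive pairs gives $f(y_i) \adjeq_{\lambda} f(y_{i+1})$ for $0 \leq i < m$. The only subtlety is that the definition of path uses strict adjacency $\adj$, not $\adjeq$, so the sequence $f(y_0), \ldots, f(y_m)$ is not literally a path if some consecutive values coincide.

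The main (mild) obstacle is therefore this bookkeeping step: I would remove repeated consecutive terms from $\{f(y_i)\}$ to obtain a subsequence $f(y_{i_0}), f(y_{i_1}), \ldots, f(y_{i_k})$ in which consecutive entries are distinct and hence genuinely $\lambda$-adjacent. This subsequence is a $\lambda$-path in $f(A)$ from $p$ to $q$, which shows $f(A)$ is $\lambda$-connected and completes the proof. No deeper machinery is required; the result is essentially the observation that digital continuity, defined globally via connected sets, is determined by its behavior on adjacent pairs.
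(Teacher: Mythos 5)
Your proof is correct. The paper states this result as a cited theorem (from \cite{Rosenfeld,Boxer99}) with no proof given, and your argument is the standard one from those sources: the forward direction via connectedness of the two-point set $\{x,x'\}$, and the reverse direction by pushing a $\kappa$-path forward and deleting consecutive repetitions to obtain a genuine $\lambda$-path. The bookkeeping step you flag is handled properly, since after removing a repeated term the surviving consecutive entries are still $\adjeq$ by the hypothesis, and once all consecutive duplicates are gone, distinctness upgrades $\adjeq$ to $\adj$.
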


See also~\cite{Chen94,Chen04}, where similar notions are referred to as {\em immersions}, {\em gradually varied operators},
and {\em gradually varied mappings}.

If $f: (X, \kappa) \to (Y, \lambda)$ is a $(\kappa, \lambda)$-continuous bijection such that
$f^{-1}: Y \to X$ is $(\lambda,\kappa)$-continuous, then $f$ is an {\em isomorphism} (called
a {\em homeomorphism} in~\cite{Bx94}), and we say $(X, \kappa)$ and $(Y, \lambda)$ are {\em isomorphic}.

Let $Y \subset X$ and let $r: X \to Y$ be $(\kappa,\kappa)$-continuous such that
$r(y)=y$ for all $y \in Y$. Then $r$ is a {\em $\kappa$-retraction}.

The notation $C(X,\kappa)$ denotes
$\{f: X \to X \, | \, f \mbox{ is }(\kappa,\kappa)-\mbox{continuous}\}$.

For $(x,y) \in \Z^2$, the projection functions $p_1,p_2: \Z^2 \to \Z$ are
\[ p_1(x,y) = x, ~~~ p_2(x,y) = y.
\]

\subsection{Digital convexity, disks}
Material in this section is quoted or paraphrased from~\cite{BxConvexity}.

Let $n>1$. We say a $c_2$-connected 
set $S=\{x_i\}_{i=1}^n \subset \Z^2$ is a
{\em (digital) line segment} if the members of $S$ are collinear. A {\em digital line}
is a $c_2$-connected set $S$ in which all members are collinear and $(S,c_2)$ is isomorphic
to $(\Z, c_1)$.

\begin{remark}
\label{segSlope}
{\rm \cite{BxConvexity}}
A digital line segment or line must be vertical, horizontal, or have
slope of $\pm 1$. We say a segment or line with slope of $\pm 1$ is
{\em slanted}. A horizontal or vertical line is {\em axis parallel}.
\end{remark}

Given a digital line $L \subset \Z^2$, $\Z^2 \setminus L$ has two $c_1$-components. The union of $L$
and a $c_1$-component of $\Z^2 \setminus L$ is a {\em (digital) half-plane}.

A {\em (digital) $\kappa$-closed curve} is a
path $S=\{s_i\}_{i=0}^{m-1}$ such that $i \neq j$ implies $s_i \neq s_j$,
and $s_i \adj_{\kappa} s_{(i+1) \mod \, m}$ for $0 \leq i \leq m-1$.

\begin{definition}
\label{diskDef}
{\rm \cite{BxConvexity}}
Let $S \subset \Z^2$ be a $c_2$-closed curve such that
$\Z^2 \setminus S$ has two $c_1$-components, one finite (possibly empty) and the
other infinite. The union $D$ of $S$ and the finite $c_1$-component 
of $\Z^2 \setminus S$ is a {\em (digital) disk}. $S$ is
a {\em bounding curve} of $D$. The finite $c_1$-component 
of $\Z^2 \setminus S$ is the {\em interior of} $S$, denoted $Int(S)$,
and the infinite $c_1$-component of $\Z^2 \setminus S$ is the {\em exterior of} 
$S$, denoted $Ext(S)$.
\end{definition}

A maximal digital line segment in $S$ is an {\em edge of} $S$.
Note a disk may have multiple distinct bounding curves~\cite{BxConvexity}.

\begin{figure}
    \centering
    \includegraphics[height=1.75in]{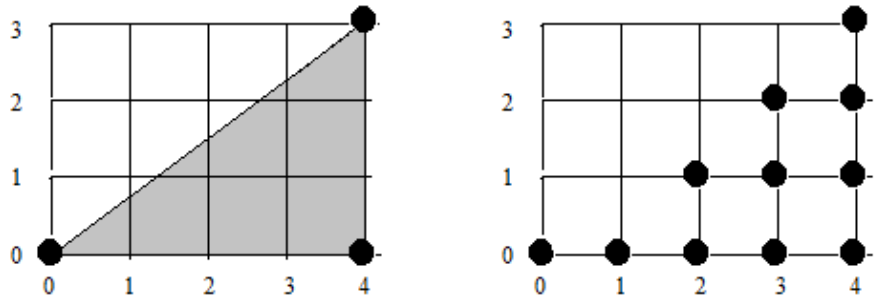}
    \caption{Left: a triangular disc $T$ in the Euclidean plane with vertices $(0,0)$,
    $(4,0)$, and $(4,3)$. Right: the digital image $Y=T \cap \Z^2$. It seems reasonable to
    call $Y \setminus \{(0,0)\}$, but not $Y$, digitally convex. Note also that since the
    only digital line segments among the marked points
    containing $(0,0)$ are (horizontal) subsets of $Y$, it is not sufficient
    to define a digitally convex set as one containing all the digital segments (which must be horizontal, vertical, or
    have slope $\pm 1$) connecting pairs of its points.
    }
    \label{fig:convexRealAndDigital}
\end{figure}

A set $X$ in a Euclidean space $\R^n$ is
{\em convex} if for every pair of distinct
points $x,y \in X$, the line segment
$\overline{xy}$ from $x$ to $y$ is contained in $X$.
The {\em convex hull of} $Y \subset \R^n$,
denoted $hull(Y)$, is the
smallest convex subset of $\R^n$ that contains~$Y$.
If $Y \subset \R^2$ is a finite set, then
$hull(Y)$ is a single point if $Y$ is a singleton;
a line segment if $Y$ has at least 2 members and all are
collinear; otherwise, $hull(Y)$ is a polygonal disk,
and the endpoints of the edges of $hull(Y)$ are its {\em vertices}.

Unsatisfactory attempts to define digital convexity for finite subsets of the digital plane~$\Z^2$ include the following.
\begin{itemize}
    \item We might try defining convexity by the condition that $Y=Y' \cap \Z^2$ where $Y'$ is a convex subset of $\R^2$.
          Figure~\ref{fig:convexRealAndDigital} illustrates why this is unsatisfying;
          the triangular disk $Y' \subset \R^2$ with vertices
          $(0,0)$, $(4,0)$, and $(4,3)$ meets $\Z^2$ in the digital image $Y$ shown on the right side
          of this figure. It appears reasonable to call $Y \setminus \{(0,0)\}$ digitally convex, but not $Y$.
    \item We might try defining convexity by the condition $Y$ contains every digital segment connecting two of its points.
          The example of Figure~\ref{fig:convexRealAndDigital} shows this to be unsatisfying. Note in this digital image,
          the only digital segments containing $(0,0)$ are horizontal and contained in $Y$.
    \item We might try defining convexity by the condition that given $y_0,y_1 \in Y$,
          there is a digital segment in $Y$ from $y_0$ to $y_1$. That this is unsatisfying can be seen in
           Figure~\ref{fig:convexRetract}, where the image shown is one that we want to call convex, although
           its points $(2,2)$ and $(3,4)$ are not connected in $Y$ by a digital segment.
\end{itemize}

We want our definition to capture the feel that a convex disk is a digital version of a Euclidean
convex polygon in which all vertices are integer points and all edges are digital line segments
(hence are axis parallel or slanted).
Thus, we have the following definition.

\begin{definition}
{\rm \cite{BxConvexity}}
A finite set $Y \subset \Z^2$ is 
{\em (digitally) convex} if either
\begin{itemize}
    \item $Y$ is a single point, or
    \item $Y$ is a digital line segment, or
    \item $Y$ is a digital disk with a bounding curve $S$
          such that the endpoints of the edges
          of~$S$ are the vertices of $hull(Y) \subset \R^2$.
\end{itemize}
\end{definition}

\subsection{Approximate fixed points and the AFPP}
\label{approxPrelim}
Let $f \in C(X,\kappa)$
and let $x \in X$. We say
\begin{itemize}
    \item $x$ is a {\em fixed point} of $f$ if $f(x)=x$; 
    \item If $f(x) \adjeq_{\kappa} x$, then
          $x$ is an {\em almost fixed point}~\cite{Rosenfeld,Tsaur} or
          {\em approximate fixed point}~\cite{BEKLL} of 
          $(f,\kappa)$.
    \item A digital image $(X,\kappa)$ has the
          {\em approximate fixed point property} (AFPP)~\cite{BEKLL} if for every $f \in C(X,\kappa)$
          there is an approximate fixed point of $f$. This generalizes the {\em fixed point property}
          (FPP): a digital image $(X,\kappa)$ has the FPP if every $f \in C(X,\kappa)$ has a
          fixed point.
\end{itemize}

The AFPP gathered attention in part because only a digital image with a single point has the
FPP~\cite{BEKLL}.

A. Rosenfeld's paper~\cite{Rosenfeld} states the following as its Theorem~4.1 (quoted verbatim).
\begin{quote}
    Let $I$ be a digital picture, and let $f$ be a continuous function from $I$
    into $I$; then there exists a point $P \in I$ such that $f(P)=P$ or is a neighbor
    or diagonal neighbor of $P$.
\end{quote}
We quote from~\cite{BxAFPP}:
\begin{quote}
Several subsequent papers have incorrectly
concluded that this [Rosenfeld's] result implies that $I$ with
some $c_u$ adjacency has the $AFPP_S$ [in the current paper, the AFPP]. 
By {\em digital picture} Rosenfeld means a digital cube, $I= [0,n]_{\Z}^v$.
By a ``continuous function" he means a $(c_1,c_1)$-continuous function;
by ``a neighbor or diagonal neighbor of $P$" he means a $c_v$-adjacent point.
\end{quote}
Thus, Rosenfeld's result was important but not equivalent to
Theorem~\ref{haveAFPP}(6), below.

\begin{thm}
\label{haveAFPP}
The following digital images have the AFPP.
\begin{enumerate}
    \item Any digital interval $([a,b]_{\Z}, c_1)$~\cite{Rosenfeld}.
    \item Any digital image $(Y,\lambda)$ that is isomorphic to
          $(X,\kappa)$ such that $(X,\kappa)$ has the AFPP~\cite{BEKLL}.
    \item Any digital image $(Y,\kappa)$ that is a retract of
          $(X,\kappa)$ such that $(X,\kappa)$ has the AFPP~\cite{BEKLL}.
    \item Any digital image $(T,\kappa)$ that is a tree~\cite{BxAFPPtreesProducts}.
    \item Any digital image $(X,c_{m+n})$ such that $X = X' \times \Pi_{i=1}^n [a_i,b_i]_{\Z}$,
           $X' \subset \Z^m$, and $(X',c_m)$ has the AFPP~\cite{BxAFPPtreesProducts}.
    \item Any digital cube $(\Pi_{i=1}^n [a_i,b_i]_{\Z}, c_n)$~\cite{BxAFPPtreesProducts}.
    \item Any digitally convex image $(X,c_2) \subset \Z^2$~\cite{BxCvexAFPP}.
    \item The normal product $(X \times Y, NP(\kappa, \lambda))$ of digital images
          $(X, \kappa)$ and $(Y, \lambda)$ that each have the AFPP~\cite{KH21}.
    \item The wedge $(X \vee Y, \kappa)$ of digital images $(X,\kappa)$ and $(Y,\kappa)$ that have the 
          AFPP~\cite{BEKLL}.
\end{enumerate}
\end{thm}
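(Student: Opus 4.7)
This statement aggregates nine AFPP results attributed to existing references, so my plan is to sketch the shortest self-contained argument for each part in an order that respects dependencies, beginning with the discrete intermediate value argument that underlies item (1).

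For (1), given $f \in C([a,b]_{\Z}, c_1)$, I would define $g(n) = f(n) - n$. Theorem \ref{localContinuity} forces $|f(n+1) - f(n)| \le 1$, so $|g(n+1) - g(n)| \le 2$. Because $g(a) \ge 0$ and $g(b) \le 0$, some $n_0$ satisfies $|g(n_0)| \le 1$, yielding an approximate fixed point. For (2), any isomorphism $\phi: X \to Y$ turns $f \in C(Y,\lambda)$ into $\phi^{-1} \circ f \circ \phi \in C(X,\kappa)$, and an approximate fixed point of the latter maps via $\phi$ to one of the former. For (3), given $f \in C(Y,\kappa)$ and a retraction $r: X \to Y$, the composition $i \circ f \circ r \in C(X,\kappa)$ (with $i: Y \hookrightarrow X$) has an approximate fixed point $x_0$; applying the continuous map $r$ to $f(r(x_0)) \adjeq x_0$ and using $r|_Y = \id$ shows $f(r(x_0)) \adjeq r(x_0)$, so $r(x_0)$ is approximately fixed for $f$. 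Part (6) is then a special case of (5) with a one-point $X'$, or can be obtained directly by induction on $n$ from (1).

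The substantive items are (4), (5), (7), (8), (9). For (5), I would induct on $n$; at the inductive step, given continuous $f: X' \times [a_n, b_n]_{\Z} \to X' \times [a_n, b_n]_{\Z}$, I would project to the interval coordinate and combine (1) with the AFPP of $(X',c_m)$ on appropriate slices, tracking carefully how $c_{m+n}$-adjacency couples the two coordinates. For (4), the argument exploits the unique-path structure of a tree: one finds a vertex whose displacement under $f$ is minimal in a suitable sense and verifies that it must be an approximate fixed point. Part (7) is the pivotal result of \cite{BxCvexAFPP} that this paper extends; the natural plan is to exhibit any digitally convex $(X, c_2)$ as a retract of an enclosing rectangular $c_2$-cube and then invoke (6) together with (3), with the core technical work being to construct a retraction that respects the axis-parallel and slanted edges forced by the convexity definition.

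Parts (8) and (9) proceed by slicing. For (8), the normal product adjacency lets a continuous self-map of $X \times Y$ be analyzed through its compositions with the projections, and the AFPPs of the factors can be combined to produce a simultaneous approximate fixed point. For (9), if $f \in C(X \vee Y, \kappa)$ has wedge point $x_0$, the second clause of Definition \ref{wedgeDef} constrains the images of edges of $X$ and $Y$; one case-splits on whether $f$ sends each piece back into itself, and if not, retracts $X \vee Y$ onto the piece containing the image and applies the AFPP there. I expect the main obstacles to be the explicit retraction in (7) and the wedge-point case analysis in (9); the remaining items follow from the discrete intermediate value argument and functoriality of continuity.
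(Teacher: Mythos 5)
The paper gives no proof of Theorem~\ref{haveAFPP}: it is a compilation of results imported from the cited references, so there is no internal argument to compare yours against. Judged on its own terms, your proposal is complete and correct for items (1), (2), (3), and (6): the discrete intermediate value argument for $g(n)=f(n)-n$, conjugation by an isomorphism, and the retraction argument (including the key observation that $r(f(r(x_0)))=f(r(x_0))$ because $f(r(x_0))\in Y$) are the standard proofs, and deducing (6) from (1) via (5), or via a one-point factor plus (2), is sound. Your plan for (7) also matches what the paper itself relies on (Proposition~\ref{cvxRetractOfRect} and the closing remarks): exhibit the convex disk as a $c_2$-retract of an enclosing digital rectangle and invoke (3) and (6).

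The genuine gaps are in (4), (5), (8), and (9), where you give plans rather than proofs, and in (5) and (8) the plan as stated would not go through. A slice $X'\times\{t\}$ (or a fiber of the normal product) is not invariant under $f$, so ``applying the AFPP of a factor on appropriate slices'' has no direct meaning: the induced self-map of one factor depends on the other coordinate, and approximate fixed points obtained factor-by-factor need not assemble into a single point $(x_0,y_0)$ with $f(x_0,y_0)\adjeq(x_0,y_0)$. The cited proofs must engineer a single induced self-map of one factor (via a projection composed with a section) and then control the remaining coordinate; that combination step is where the content of (5) and (8) lives and it is absent from your sketch. In (9) the delicate case is when the approximate fixed point $p$ of $r_X\circ f|_X$ (with $r_X$ collapsing $Y$ to the wedge point $x_0$) satisfies $f(p)\in Y\setminus\{x_0\}$: one then gets $p\adjeq x_0$ and must use the wedge adjacency condition of Definition~\ref{wedgeDef} to force $f(x_0)=x_0$; your case split on whether $f$ preserves each summand does not reach this situation. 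For (4), the ``minimal displacement'' idea needs the tree's unique-path metric made precise before the minimizer can be verified to be approximately fixed. None of these is a wrong approach, but each omits the step that makes the corresponding item nontrivial.
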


\begin{lem}
\label{ZnSuffices}
Let $r: \Z^n \to X$ be a $c_n$-retraction, where $X$ is finite. Then $(X,c_n)$ has the AFPP.
\end{lem}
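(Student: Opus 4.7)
The plan is to reduce to a finite situation by enclosing $X$ in a large enough digital cube, restricting the given retraction to that cube, and then invoking the AFPP transfer results of Theorem~\ref{haveAFPP}.

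First I would choose integers $a_i < b_i$ so that $X \subset C := \prod_{i=1}^n [a_i,b_i]_{\Z}$; this is possible because $X$ is finite. Since $r: \Z^n \to X$ takes its values in $X \subset C$, the image $r(C)$ is contained in $C$, so the restriction $r|_C$ is a function $C \to C$. Continuity of $r|_C$ with respect to $c_n$ follows from Theorem~\ref{localContinuity}: if $x \adj_{c_n} x'$ in $C$, then the same adjacency holds in $\Z^n$, whence $r(x) \adjeq_{c_n} r(x')$. Because $r$ fixes every point of $X$, so does $r|_C$, so $r|_C$ is a $c_n$-retraction of $C$ onto $X$.

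Now I would apply two parts of Theorem~\ref{haveAFPP}. By item~(6), the digital cube $(C,c_n)$ has the AFPP. By item~(3), any retract of a digital image with the AFPP inherits the AFPP, and we have just exhibited $X$ as such a retract of $C$. Hence $(X,c_n)$ has the AFPP.

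There is no real obstacle here; the only subtlety is verifying that the restriction of $r$ to the cube $C$ is still well-defined as a self-map of $C$ and still a retraction, which is immediate once one observes $X \subset C$ and $r(\Z^n) = X$. The essential content of the lemma is thus simply that a global $c_n$-retraction from $\Z^n$ can always be localized to a finite cube to which the known AFPP results apply.
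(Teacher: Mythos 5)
Your proposal is correct and follows essentially the same route as the paper: enclose the finite set $X$ in a digital cube, restrict $r$ to obtain a $c_n$-retraction of the cube onto $X$, and conclude via Theorem~\ref{haveAFPP}, parts (3) and (6). The extra verification you give of the restriction's continuity and retraction property is fine but routine.
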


\begin{proof}
Since $X$ is finite, there exists $m \in \N$ such that $X \subset Y = [-m,m]_{\Z}^n$. Then
$r|_Y$ is a $c_n$-retraction of $Y$ to $X$. The assertion follows from
Theorem~\ref{haveAFPP}, parts (3) and (6).
\end{proof}

As stated in~\cite{BxCvexAFPP}, ``The next result suggests that `most' digital images 
$(X,c_u) \subset \Z^v$ that have the AFPP have $u=v$." We will therefore
focus our attention on the $c_2$ adjacency in~$\Z^2$.

\begin{thm}
{\rm \cite{BxAFPP}}
\label{c_nMostInteresting}
Let $X \subset \Z^v$ be such that $X$ has a subset $Y = \Pi_{i=1}^v [a_i,b_i]_{\Z}$, where
$v>1$; for all indices $i$, $b_i \in \{a_i,a_i + 1\}$; and, for at least 2 indices $i$,
$b_i=a_i+1$. Then $(X,c_u)$ fails to have the AFPP for $1 \le u < v$.
\end{thm}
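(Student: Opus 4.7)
The plan is to construct, for each $u$ with $1 \le u < v$, an explicit $f \in C(X, c_u)$ having no $c_u$-approximate fixed point, directly refuting the AFPP. The natural candidate is $f = \sigma \circ \pi$, where $\pi : X \to Y$ clamps $X$ onto the sub-box $Y$ coordinate-wise and $\sigma : Y \to Y$ reflects its nondegenerate coordinates.

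Concretely, let $I = \{i : b_i = a_i + 1\}$, so $|I| \ge 2$ by hypothesis, and define $\pi(x)_i = \max\{a_i, \min\{b_i, x_i\}\}$ and $\sigma(y)_i = a_i + b_i - y_i$, so that $\sigma$ swaps $a_i \leftrightarrow b_i$ for $i \in I$ and is the identity on degenerate directions. Both maps are $(c_u, c_u)$-continuous by Theorem~\ref{localContinuity}: $\pi$ is coordinate-wise nonexpansive, and $\sigma$ is a product of independent coordinate involutions that preserves the ``differ by $1$ in at most $u$ coordinates'' relation, so $f \in C(X, c_u)$. I would then argue the absence of approximate fixed points by cases. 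If $x \in X \setminus Y$ has some coordinate $j \in I$ with $x_j \notin [a_j, b_j]_\Z$, then clamping plus reflection gives $|f(x)_j - x_j| \ge 2$, which immediately defeats $c_u$-adjacency. If $x \in Y$, then $f(x) = \sigma(x)$ differs from $x$ in exactly $|I|$ coordinates, each by $1$, so $c_u$-adjacency fails whenever $|I| > u$.

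The main obstacle is the regime $|I| \le u < v$, together with points $x \in X \setminus Y$ that exit $Y$ only through degenerate directions: in this regime the pure reflection is itself a $c_u$-adjacency step, so $\sigma \circ \pi$ admits approximate fixed points and the construction above is insufficient. To cover this case I expect to augment $\sigma$ by a further shift along a degenerate direction of $Y$, landing $f(x)$ at a point of $X \setminus Y$ that is separated from $x$ by at least one additional coordinate; the containment $Y \subset X$ (rather than $Y$ in isolation) becomes essential here, since the shift needs an actual target outside $Y$. Designing such an augmentation so that (i) the image stays in $X$, (ii) continuity via Theorem~\ref{localContinuity} is preserved, and (iii) no $c_u$-approximate fixed point survives, is the delicate technical step of the proof, and is where the full hypothesis on $X$---rather than only the antipodal structure of $Y$---must do the real work.
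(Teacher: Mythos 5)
The paper states Theorem~\ref{c_nMostInteresting} as a citation to \cite{BxAFPP} and gives no proof of it, so there is nothing internal to compare against; I can only assess your construction on its merits. Your clamp-and-reflect map $f=\sigma\circ\pi$ is the natural (and surely the intended) argument, your continuity checks are right, and the argument is complete whenever $u<|I|$: in particular it fully handles the only instance this paper ever invokes, namely $v=2$, $u=1$, where both coordinates of $Y$ are nondegenerate. It also handles $u=1$ in general, since $|I|\ge 2$.

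The residual regime $|I|\le u<v$ that you honestly flag is, however, not a repairable technical gap: the statement as quoted is simply false there, so no augmentation of $f$ can close it. Take $v=3$, $u=2$, and $X=Y=\{0,1\}^2\times\{0\}$. The hypotheses hold with $I=\{1,2\}$, yet any two distinct points of $X$ differ in at most two coordinates, each by exactly $1$, and agree in the third; hence $(X,c_2)$ is a complete graph on four vertices, in which every point is an approximate fixed point of every self-map, so $(X,c_2)$ has the AFPP. This also shows your proposed repair cannot work: the hypothesis permits $X=Y$, in which case there is no point of $X\setminus Y$ to shift to. The correct conclusion at level $u$ requires a stronger hypothesis --- e.g., at least $u+1$ indices with $b_i=a_i+1$ (equivalently $u<|I|$), or $b_i=a_i+1$ for all $i$ --- and under that hypothesis your construction as written already constitutes a complete proof. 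You should therefore either prove the theorem only in the regime $u<|I|$, or flag the quoted statement itself as needing this correction.
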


\begin{exl}
{\rm \cite{BEKLL}}
\label{SCC4notAFPP}
A digital simple closed curve of at least 4 points does not have the AFPP.
\end{exl}

\section{Retractions to convex images}
In this section, we build retractions that will give us tools for proofs of assertions in
sections~\ref{unionSection} and~\ref{wedgeSection}.

The following proposition can
be useful in determining whether $(X,c_2)$ has the AFPP, for $X \subset \Z^2$. The assertion
following ``Further" was not part of the assertion as stated in~\cite{BxCvexAFPP}, but was
demonstrated in the proof. The version in~\cite{BxCvexAFPP} gave a retraction to $X$
of a digital rectangle that contains $X$, but such a retraction is easily
extended to a retraction of $\Z^2$ that satisfies the asserted properties.

\begin{definition}
Let $X$ be a convex disk in $\Z^2$ with minimal bounding curve $S$.
Let $L_1$ and $L_2$ be distinct parallel digital lines such that for $i \in \{1,2\}$,
$L_i \cap X \subset S$. Then $L_1$ and $L_2$ {\em sandwich} $X$.
\end{definition}

\begin{figure}
    \centering
    \includegraphics[height=3.5in]{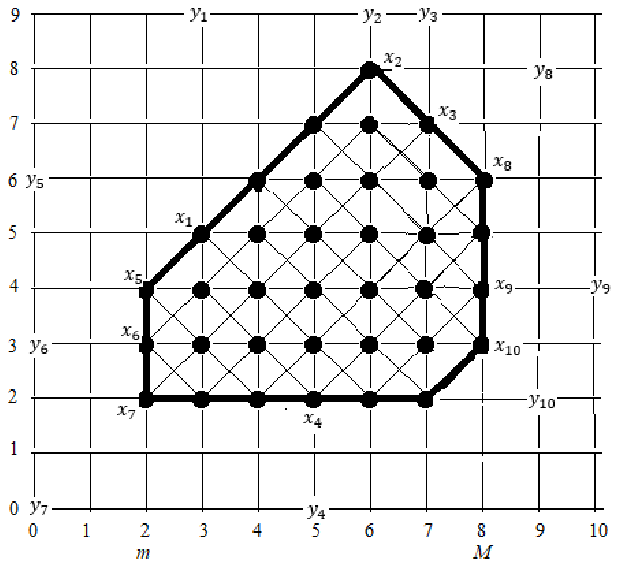}
    \caption{Retraction $r$ of a digital image $Y$ to a subset $X$ that is a convex disk as in
    Theorem~\ref{cvxRetractOfRect}, where for maximally separated vertical (shown here) or
    horizontal lines $L$ and their respective half-planes $H$ that do not contain $X$,
    $r(H)= X \cap L$. \newline
    a) Each point vertically above or below the disk is mapped to its nearest 
       vertical neighbor in $X$, e.g., $r(y_i)=x_i$, $i \in \{1,2,3,4\}$. \newline
    b) Each point to the left (not necessarily horizontally) of $X$ is mapped to 
       the nearest member of $X$ with minimal first coordinate,
       e.g., $r(y_i)=x_i$, $i \in \{5,6,7\}$. \newline
    c) Each point to the right (not necessarily horizontally) of $X$ is mapped to 
       the nearest member of $X$ with maximal first coordinate,
       e.g., $r(y_i)=x_i$, $i \in \{8,9,10\}$. \newline
       Let $L_0$ be the vertical line $x=2$.
       Note $r$ maps the intersection of $Y$ and the half-plane 
       $H_0=\{(x,y) \in \Z^2 \, | \, x \le 2\}$ to the boundary edge
       $\overline{x_5 x_7}=L_0 \cap X$. Similarly, if we take $L_1$ to be the vertical line $x=8$
       and $H_1$ the half-plane of points satisfying $x \ge 8$, we have 
       $r(H_1)=L_1 \cap X = \overline{x_8x_{10}}$.
       }
    \label{fig:convexRetract}
\end{figure}

\begin{prop}
\label{cvxRetractOfRect}
{\rm ~\cite{BxCvexAFPP}}
Let $X \subset \Z^2$, such that $X$ is a
digitally convex disk. Let $S$ be a bounding curve for $X$.
Then there is a $c_2$-retraction $r: \Z^2 \to X$ such that $r(\Z^2 \setminus Int(S)) = S$.
Further, suppose $S$ is a minimal bounding curve for $X$. We can take $r$ to satisfy 
the following.
 Let $L_0$ and $L_1$ be axis parallel lines that sandwich $X$. 
          Let $H_i$ be the half-planes determined by $L_i$, $i \in \{0,1\}$, that
          do not contain $X$. 
          Then $r|_{H_i}$ retracts $H_i$ to $L_i \cap S$, $i \in \{0,1\}$, such that
          $x \in H_i$ implies $r(x)$ is the unique closest (with respect to
          the Euclidean metric) point of $L_i \cap S$ to $x$.
          (See Figure~\ref{fig:convexRetract}).
        
\end{prop}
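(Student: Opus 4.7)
The plan is to construct $r$ explicitly by clamping to vertical slabs when the sandwich lines are vertical, leaving the horizontal sandwich case to a symmetric construction. Let $x_{\min}$ and $x_{\max}$ be the extreme first coordinates of points of $X$. By digital convexity, for each integer $a \in [x_{\min},x_{\max}]_{\Z}$ the vertical slice $\{b \in \Z : (a,b) \in X\}$ is a nonempty integer interval $[g(a),f(a)]$. Setting $\tilde a = \min(x_{\max},\max(x_{\min},a))$, I would define
\[
    r(a,b) \;=\; \bigl(\tilde a,\; \min(f(\tilde a),\,\max(g(\tilde a),\,b))\bigr).
\]

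That $r(p)=p$ for every $p \in X$ is immediate, and the identity $r(\Z^2\setminus Int(S)) = S$ follows because any point of $Ext(S)$ is mapped either to a top or bottom profile point $(a,f(a))$ or $(a,g(a))$, or to a point of the extreme left or right vertical edge of $X$; all of these lie on $S$. The main technical fact I would prove first is a discrete Lipschitz estimate: $|f(a{+}1)-f(a)| \le 1$ and $|g(a{+}1)-g(a)| \le 1$ for every pair of consecutive integers in $[x_{\min},x_{\max}]_{\Z}$. This is where digital convexity is essential: $hull(X)$ is a convex polygon whose edges have slope in $\{0,\pm 1,\infty\}$, so the concave upper profile $f$ and the convex lower profile $g$ are piecewise linear with integer slopes of absolute value at most~$1$ between successive integer abscissas.

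Given this estimate, $c_2$-continuity reduces to a short case analysis on $c_2$-adjacent pairs $p=(a,b)$, $q=(a',b')$: when $\tilde a$ and $\tilde a'$ fall in the interior of the middle slab, the 1-Lipschitz behaviour of $f$, $g$ and of the clamping operation $t \mapsto \min(\beta,\max(\alpha,t))$, combined with $|b-b'|\le 1$, forces $r(p) \adjeq_{c_2} r(q)$; at the seams between the three slabs (for instance $a = x_{\min}-1$ and $a' = x_{\min}$), a direct check shows the clamped second coordinates still differ by at most~$1$. I expect this seam analysis to be the main source of bookkeeping, though no deep obstacle is hidden. For the sandwich assertion, take $L_0 = \{x=x_{\min}\}$, $L_1 = \{x=x_{\max}\}$ and the half-planes $H_i$ not containing $X$; on $H_0$ the formula above specialises to $r(a,b) = (x_{\min},\,\min(f(x_{\min}),\max(g(x_{\min}),b)))$, which is manifestly the unique Euclidean nearest point of the vertical segment $L_0\cap S$ to $(a,b)$, and the same formula handles $H_1$. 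A horizontal sandwich is treated by the symmetric construction in which the roles of the two coordinates are swapped.
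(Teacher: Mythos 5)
Your construction is correct and coincides with the retraction the paper intends: this proposition is imported from~\cite{BxCvexAFPP} without proof here, but the clamping map you define is precisely the map described in the caption of Figure~\ref{fig:convexRetract} (nearest vertical neighbour for points above or below the disk, nearest point of the extreme column for points to the left or right of it). The only ingredients worth making explicit are that each column $\{b \in \Z : (a,b)\in X\}$ is a full integer interval whose two endpoints lie on $S$ (standard consequences of the definition of a digital disk together with convexity); granting these, your $1$-Lipschitz estimates for $f$, for $g$, and for $t \mapsto \min(\beta,\max(\alpha,t))$ already give $c_2$-continuity uniformly at every pair of adjacent points, so the separate seam analysis you anticipate is not actually needed.
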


The importance of convexity in Proposition~\ref{cvxRetractOfRect} and in 
Theorem~\ref{cvxRetractOfRectExtension} below is illustrated in the following.

\begin{figure}
    \centering
    \includegraphics[height=1.75in]{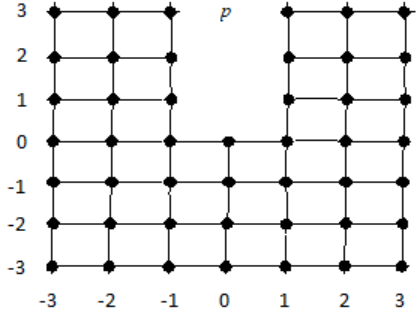}
    \caption{$X$ for Example~\ref{blockUNonRetract} for $n=3$.
       $X$ is not a $c_2$-retract of $\Z^2$, roughly because there is no place in $X$ for a
       $c_2$-retraction to send $p=(0,n) \not \in X$.
       }
    \label{fig:squareAndBlockU}
\end{figure}

\begin{exl}
\label{blockUNonRetract}
Let $X = [-n,n]_{\Z}^2 \setminus (\{0\} \times [1,n]_{\Z})$,  $n > 2$.
(See Figure~\ref{fig:squareAndBlockU}.)
Then $X$ is a non-convex disk that is not a $c_2$-retract of $\Z^2$.
\end{exl}

\begin{proof}
Our argument is modified from the proof of Example 5.11 of~\cite{Bx94}.
Consider the point $p=(0,n) \in \Z^2 \setminus X$. If there were a $c_2$-retraction
$r: \Z^2 \to X$, then by continuity we would have
\[ (-1,n) = r(-1,n) \adjeq_{c_2} r(p) \adjeq_{c_2} r(1,n) = (1,n).
\]
Since there is no point $y \in X$ that is $c_2$-adjacent to both $(-1,n)$
and $(1,n)$, there cannot be such a retraction.
\end{proof}

\begin{figure}
    \centering
    \includegraphics[height=3.5in]{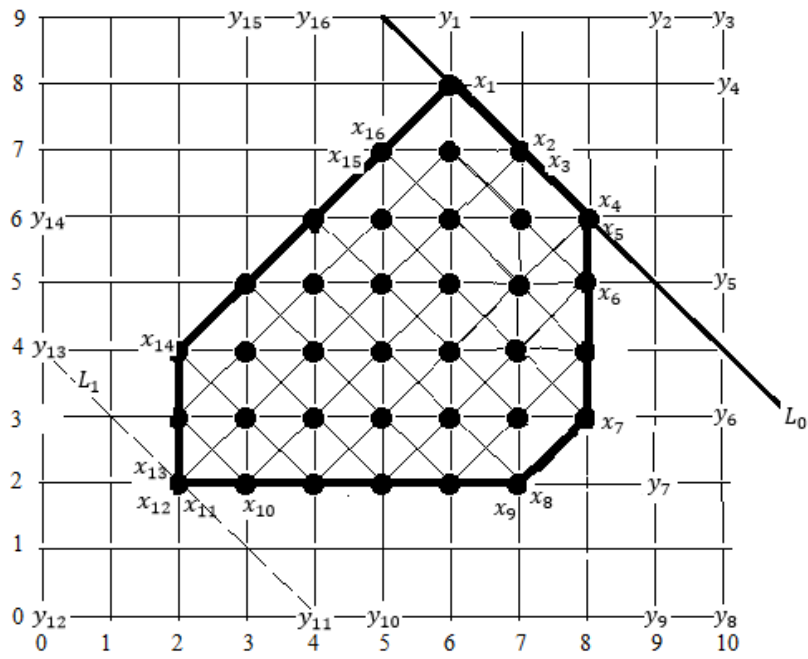}
    \caption{Retraction $r$ of a digital image $Y$ to a subset $X$ that is a convex disk as in
    Theorem~\ref{cvxRetractOfRectExtension}, so that for maximally separated parallel slanted lines 
    $L_0$ and $L_1$ that intersect $X$ and the respective half-planes $H_i$
    determined by $L_i$ not containing $X$ we have $r(Y \cap H_i) = L_i \cap X$.
    We have $r(y_i)=x_i$ for $1 \le i \le 16$.
}
    \label{fig:retractToCnvxDiskSlant}
\end{figure}

We strengthen Proposition~\ref{cvxRetractOfRect} by dropping the requirement that
the lines $L_i$ be axis parallel.

\begin{thm}
\label{cvxRetractOfRectExtension}
Let $X \subset \Z^2$, such that $X$ is a
digitally convex disk. Let $S$ be a bounding curve for $X$.
Then there is a $c_2$-retraction $r: \Z^2 \to X$ such that $r(\Z^2 \setminus Int(S)) = S$.
Further, suppose $S$ is a minimal bounding curve for $X$ and $L_0$ and $L_1$ are distinct 
parallel digital lines in $\Z^2$ that sandwich $X$.
Let $H_i$ be the
half-plane of $\Z^2$ determined by $L_i$ that does not contain $X$. Then we can take $r$
such that $r|_{H_i}$ retracts $H_i$ to $L_i \cap S$, for $i \in \{0,1\}$, such that
          if $x \in L_i$ then $r(x)$ is the unique closest (with respect to
          the Euclidean metric) point of $L_i \cap S$ to $x$.
\end{thm}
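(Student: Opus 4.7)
The plan is to split the proof into two cases based on the slope of $L_0$ and $L_1$. By Remark~\ref{segSlope}, any digital line in $\Z^2$ is either axis-parallel or slanted (slope $\pm 1$), and since $L_0, L_1$ are parallel, either both are axis-parallel or both are slanted. The axis-parallel case is precisely Proposition~\ref{cvxRetractOfRect}, so the only new content is the slanted case. After applying a suitable $c_2$-isomorphism of $\Z^2$ (coordinate swap or reflection), I may assume both $L_0$ and $L_1$ have slope $+1$, and write $L_i = \{(x, y) \in \Z^2 : y - x = b_i\}$ with $b_0 > b_1$, so that $L_i \cap S = \{(k, k + b_i) : k \in [m_i, M_i]_{\Z}\}$ for some $m_i \le M_i$.

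I would then define $r: \Z^2 \to X$ by cases on the slanted slab value $s = y - x$. For $s \ge b_0$, set $r(x, y) = (k, k + b_0)$ with $k = \max(m_0, \min(M_0, x))$; symmetrically for $s \le b_1$ using $L_1 \cap S$. On $L_i$ the Euclidean distance from $(x, x + b_i)$ to $(k, k + b_i)$ is $|x - k|\sqrt{2}$, so these formulas produce exactly the prescribed closest-point projection on $L_i$, and they also send $H_i \cup L_i$ into $L_i \cap S \subseteq S$. For $(x, y)$ in the open middle strip $b_1 < y - x < b_0$, set $r(x, y) = (x, y)$ if $(x, y) \in X$; otherwise project $(x, y)$ onto the portion of $S$ connecting $L_0 \cap S$ to $L_1 \cap S$ on the side where $(x, y)$ lies (the ``slanted-SW'' or ``slanted-NE'' arc of $S$), assigning each middle-strip slab a specific $S$-vertex on that arc.

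The main obstacle is the verification of $c_2$-continuity. The routine cases (within a single case of the definition, or at the seams across $L_0$ and $L_1$ between the outer rules and the middle-strip rule) are handled by the fact that clipping to an integer interval is $1$-Lipschitz. The delicate case is anti-diagonal $c_2$-adjacency within the middle strip: two $c_2$-adjacent points $(x, y)$ and $(x + 1, y - 1)$ have slab values differing by $2$, so they may be assigned $S$-vertices two slabs apart by the middle-strip rule. Controlling this uses the digital convexity of $X$: the slanted-SW arc of $S$ is a $c_2$-path from $(m_0, m_0 + b_0)$ to $(m_1, m_1 + b_1)$ whose consecutive vertices differ coordinate-wise by at most $1$, hence in slab value by at most $2$. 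This structure makes it possible to choose a slab-to-$S$-vertex assignment (with a deterministic tiebreak for slabs skipped by the arc) that is $c_2$-continuous across slabs whose indices differ by $1$ or $2$, and that agrees with the $L_0$ and $L_1$ formulas at the top and bottom of the strip.

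The remaining properties of $r$ then drop out: $r|_X = \id$ and $r(H_i) \subseteq L_i \cap S$ are immediate from the definitions, and $r(\Z^2 \setminus Int(S)) = S$ follows because the middle-strip projection is chosen to land on the slanted-SW or slanted-NE arcs of $S$, which lie in $S$.
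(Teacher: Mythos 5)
Your strategy is essentially the paper's: reduce to the slanted case via Proposition~\ref{cvxRetractOfRect}, define $r$ piecewise by slanted slabs (identity on $X$, a projection onto $L_i\cap S$ on $H_i$, a slab-by-slab assignment to the two boundary arcs in between), and check $c_2$-continuity case by case. Your clipping rule $k=\max(m_0,\min(M_0,x))$ on the outer half-planes is cleaner than the paper's perpendicular-foot construction and does give the required closest-point behavior on $L_i$ itself. The problem is that the two continuity verifications you defer are exactly where the content of the theorem lives, and the one you label ``routine'' is in fact an obstruction, not a computation. Take $X=[0,2]_{\Z}^2$ with $L_0=\{(x,y):y-x=2\}$ and $L_1=\{(x,y):y-x=-2\}$; these are parallel digital lines sandwiching $X$, and $L_0\cap S=\{(0,2)\}$. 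The stated property forces $r(-1,1)=(0,2)$ since $(-1,1)\in L_0$, yet $(-1,1)\adj_{c_2}(0,0)\in X$ and $(0,2)$ is not $c_2$-adjacent to $r(0,0)=(0,0)$. No middle-strip assignment, tiebreak, or Lipschitz estimate can repair this: the seam between $H_0$ and $X$ is incompatible with the required boundary behavior whenever $L_i\cap S$ is a single $90^{\circ}$ corner (the same failure occurs at $(3,1)$ versus $(2,2)$ on the $L_1$ side). So this case must either be excluded from the statement or handled by weakening the closest-point requirement near such a corner; it cannot be waved through.

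The ``delicate case'' you do flag is also not closed by the structural remark you give. For anti-diagonal neighbors at slabs $s$ and $s-2$, both sent to the SW endpoints $(a_s,a_s+s)$ and $(a_{s-2},a_{s-2}+s-2)$ of their slabs, adjacency-or-equality forces exactly $a_{s-2}=a_s+1$; this is not automatic for a convex disk (for $X=[0,4]_{\Z}\times[0,1]_{\Z}$ one has $a_0=0$ and $a_{-2}=2$, so the neighbors $(-1,-1)$ and $(0,-2)$ would be sent to the non-adjacent points $(0,0)$ and $(2,0)$). The observation that the SW arc is a $c_2$-path with coordinate increments at most $1$ does not by itself yield a continuous slab-to-vertex assignment, so an explicit construction and verification are still needed there. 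For what it is worth, the paper's own proof disposes of the single-point case in one sentence (``$r(P)=Q$ for all $P\in H$'') and of the middle strip by analogy, and so has the same holes; but your proposal inherits them rather than repairing them, and classifying the $H_i$--$X$ seam as routine is the concrete point to fix.
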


\begin{proof}
In light of Proposition~\ref{cvxRetractOfRect}, it suffices to
consider the case that the $L_i$ are slanted. Without loss of generality,
the $L_i$ have slope -1 and therefore 
$P = (x,y) \in L_0$ satisfies $y=-x+b_{\max}$ for
    $b_{\max} = \max \{b \, | (x, -x + b) \in X \}$ and $P = (x,y) \in L_1$ 
    satisfies $y=-x+b_{\min}$ for
     $b_{\min} = \min \{b \, | (x, -x + b) \in X \}$.
     E.g., see Figure~\ref{fig:retractToCnvxDiskSlant}.
\begin{itemize}
\item For $P \in X$, let $r(P)=P$.
\item Suppose $L=L_0$. Let $H_0$ be the half-plane such that
      $P=(x,y) \in H_0 \Leftrightarrow y \ge -x + b_{\max}$.
   (E.g., see Figure~\ref{fig:retractToCnvxDiskSlant}, where $H_0$ is the half-plane above and to 
   the right of the line marked $L_0$.) 
    \begin{itemize}
        \item If $L \cap S$ is a digital segment $\sigma$ and $P \in H_0$ can drop a perpendicular to $\sigma$ at a point $Q \in \sigma$, then
          $r(P)=Q$ (e.g., $P \in \{y_2,y_4\}$ in Figure~\ref{fig:retractToCnvxDiskSlant}).
        \item If $L \cap S$ is a digital segment $\sigma$ and $P$ can drop
              a perpendicular to $L$ at a point between the endpoints
           of $\sigma$ that does not meet a point of $\sigma$, then $r(P) = r(x-1,y)$ (e.g.,
           $r(y_3)=r(y_2)$ in Figure~\ref{fig:retractToCnvxDiskSlant}).
        \item If $L \cap S$ is a digital segment $\sigma$ and a perpendicular from $P \in H_0$ to $L$ does not fall between the endpoints of $\sigma$,
           there is a unique nearest (in the Euclidean metric) endpoint $Q$ of $\sigma$ to $P$.
           Then $r(P)=Q$ (e.g., $P \in \{y_1, y_5\}$ in Figure~\ref{fig:retractToCnvxDiskSlant}).
        \item If $L \cap S$ is a single point $Q$ then $r(P)=Q$ for all $P \in H$.
    \end{itemize}
     \item Suppose $L = L_1$. Let $H_1$ be the half-plane bounded by $L$ not containing $X$.
   (In Figure~\ref{fig:retractToCnvxDiskSlant}, $L_1$ is the line containing $\{y_{11}, y_{13}\}$,
    and $H_1$ is the half-plane below and to the left of $L_1$.)
        Then $P \in H_1$ implies $r(P) \in \sigma$ is defined in a fashion similar to that of the case $L = L_0$.
    \item For $P \in \Z^2 \setminus (X \cup H_0 \cup H_1)$, if a parallel to $L_0$ (and $L_1$)
        through $P$ meets a 
       nearest $Q \in S$ then $r(P)=Q$ (e.g., $P \in \{y_6,y_7,y_8,y_{13},y_{14}, y_{15}\}$
       in Figure~\ref{fig:retractToCnvxDiskSlant}). Otherwise, for $P=(u,v)$,
       $r(P)=r(u-1,v)$ (e.g., in Figure~\ref{fig:retractToCnvxDiskSlant}, $r(y_8)=r(y_9)$ and
       $r(y_{16})=r(y_{15})$).
\end{itemize}

We show that $r \in C(\Z^2,c_2)$ in the following; it will follow easily that $r$ is a retraction with the asserted
properties.

Since $r|_X$ is an inclusion function, $r$ satisfies the continuity condition 
of Theorem~\ref{localContinuity} for all $P \in X \setminus S$.

Next, consider regions of $\Z^2 \setminus X$. Let $P=(x,y)$. Note the $c_2$-adjacent points
          to $P$ are $(x+i,y+j)$, where $i,j \in \{-1,0,1\}$ and $(i,j) \neq (0,0)$.

Consider $r$ for $P \in H_0 \setminus S$.
   \begin{itemize}
    \item If $L_0 \cap X$ is a digital segment and $P$ has a perpendicular to  $L_0$ at
          $Q = (u,v)$, then for each $P'$ that is $c_2$-adjacent to $P$, 
          \[ r(P') \in \{(u-1,v+1), Q, (u+1,v-1)\}.
          \]
          Thus $r(P') \adjeq_{c_2} r(P)$. (See, e.g.,
          Figure~\ref{fig:retractToCnvxDiskSlant}, with $P=y_2$, $r(y_3) = r(y_2) \adj r(y_4)$.) 
          Thus $r$ is continuous at $P$.
    \item If $L_0 \cap X$ is a digital segment and $P$ has a perpendicular to the (real) line
          containing $L_0$ that does not meet a point of $L_0$, then $r(P)=r(x-1,y)$. Suppose $r(x-1,y)=(u,v)$. Then for
          each $P'$ that is $c_2$-adjacent to $P$, $r(P') \in \{(u,v), (u+1,v-1)\}$. (See, e.g.,
          Figure~\ref{fig:retractToCnvxDiskSlant}, with $P=y_3$ and $r(y_3) = r(y_2) \adj r(y_4)$.)
          Thus $r$ is continuous at $P$.
    \item If $L_0 \cap X$ is a digital segment from $(a,b)$ to $(c,d)$ and the perpendicular 
          projection of $P$ to the line of $L_0$ is $(u,v)$
          such that $u > c > a$, then $r(P)= (c,d)$. For each $P'$ that is 
          $c_2$-adjacent to $P$, $r(P') =(c,d)$. (See, e.g.,
          Figure~\ref{fig:retractToCnvxDiskSlant}, with $P=y_5$.)
          Thus $r$ is continuous at $P$.
    \item If $L_0 \cap X$ is a digital segment from $(a,b)$ to $(c,d)$ and the perpendicular 
          projection of $P$ to the line of $L_0$ is $(u,v)$
          such that $u < a < c$, then $r(P)= (a,b)$. If $P' \adj_{c_2} P$, then
          $r(P') = (a,b)$. Thus $r$ is continuous at $P$. (See, e.g.,
          Figure~\ref{fig:retractToCnvxDiskSlant}, with $P=y_1$).
    \item If $L_0 \cap X$ is a single point $(a,b)$ and $P'$ is
          $c_2$-adjacent to $P$, then $r(P)=(a,b)=r(P')$. Hence $r$ is continuous at $P$.
\end{itemize}

Thus $r$ is continuous at every $P \in H_0 \setminus S$.

Similarly, $r$ is continuous at every $P \in H_1 \setminus S$.

Similarly, $r$ is continuous at every $P \in \Z^2 \setminus (X \cup H_0 \cup H_1)$.

For $P' \adj_{c_2} P$ where $P \in S$, we have the following cases.
\begin{itemize}
    \item If $P' \in X$ then $r(P')=P' \adj P = r(P)$.
    \item If $P' \not \in X$, then we have seen above that $r$ is continuous at $P'$,
          so $r(P) \adjeq r(P')$.
\end{itemize}
Hence $r$ is continuous at $P$. 

Thus $r \in C(X,c_2)$. 

It follows from the above that $r|_{H_i}$ retracts $H_i$ to $L_i \cap S$, $i \in \{0,1\}$, such that
          if $x \in L_i \setminus S$ then $r(x)$ is the unique closest point of $L_i \cap S$ to $x$.
This completes the proof.
\end{proof}

\section{Union of convex images meeting in a common edge}
\label{unionSection}
In this and the following sections, we extend our knowledge of digital images 
$(X,c_2) \subset \Z^2$ that have the AFPP.
Most of our results are derived by showing the applicability of 
Lemma~\ref{ZnSuffices}.

In several of the assertions in this section, we will be concerned with the relation between a digital
disk $X$ and a half-plane $H$. The reader is cautioned that convenience sometimes dictates $X \subset H$
and sometimes $X$ is on the other side of the boundary of $H$;
and sometimes $X \cap H$ is an edge of $X$; and sometimes
$X \cap H$ is an endpoint of an edge of $X$.

\begin{definition}
Let $X_1$ and $X_2$ be subsets of $\Z^2$. Let $X = X_1 \cup X_2$.
Let $L$ be a digital line in $\Z^2$ such that $X_1$ and $X_2$ are on opposite sides of $L$,
and either
\begin{itemize}
    \item $X_1 \cap X_2$ is an edge of both $X_1$ and $X_2$, or
    \item $(X,c_2) = (X_1 \vee X_2, c_2)$ with wedge point $x_0$ such that $X \cap L = \{x_0\}$.
\end{itemize}
Then $L$ is a {\em line of separation of} $X_1$ and $X_2$.
\end{definition}

\begin{figure}
    \centering
    \includegraphics[height=1.5in]{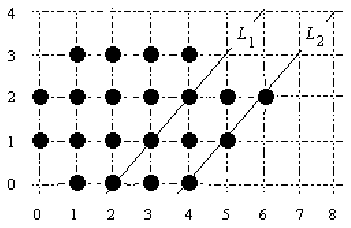}
    \caption{A digital image $X$ to illustrate Theorem~\ref{maxSeg}. \newline
   $L$ (line of separation of $X_1$ and $X_2$):~ $y=x-2$ \newline
    $X_1 = \{(x,y) \in X ~ | ~ y \ge x-2\}$ ~~~~~ $X_2 = \{(x,y) \in X ~ | ~ y \le x-2 \}$ \newline 
$H_1$ and $H_2$, are half-planes respectively Northwest of $L$ and Southeast of $L$.
}
    \label{fig:separates}
\end{figure}

\begin{thm}
\label{maxSeg}
Let $X_1 \subset \Z^2$, $X_2 \subset \Z^2$ be digitally convex disks such that
$\sigma = X_1 \cap X_2$ is an edge of both $S_1$ and
$S_2$, where $S_i$ is a minimal bounding curve of $X_i$. Then
\begin{itemize}
    \item there is a $c_2$-retraction $r: \Z^2 \to X=X_1 \cup X_2$; 
          and
    \item $(X,c_2)$ has the AFPP.
\end{itemize}
\end{thm}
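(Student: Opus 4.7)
\textit{Plan.} The goal is to produce a $c_2$-retraction $r\colon \Z^2 \to X$ and then invoke Lemma~\ref{ZnSuffices}. Let $L$ denote the digital line containing $\sigma$, and let $H^+, H^-$ be the closed half-planes cut off by $L$, labelled so that $X_1 \subset H^+$ and $X_2 \subset H^-$; the hypothesis $\sigma = X_1 \cap X_2$ gives $X_i \cap L = \sigma$ for each $i$, and $H^+ \cap H^- = L$.

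Apply Theorem~\ref{cvxRetractOfRectExtension} to each $X_i$, using $L$ as one of the two sandwich lines, to obtain $c_2$-retractions $r_i\colon \Z^2 \to X_i$; by the theorem, each $x \in L$ is sent by $r_i$ to the unique Euclidean-closest point of $\sigma$ to $x$, so $r_1|_L = r_2|_L$. Define
\[
 r(p) = \begin{cases} r_1(p) & \text{if } p \in H^+, \\ r_2(p) & \text{if } p \in H^-. \end{cases}
\]
The two cases agree on $L = H^+ \cap H^-$, the image lies in $X_1 \cup X_2 = X$, and $r$ fixes $X$ pointwise, so $r$ is a retraction onto $X$.

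To verify $c_2$-continuity, let $p \adj_{c_2} p'$. If $\{p,p'\} \subset H^+$ or $\{p,p'\} \subset H^-$, continuity at the pair follows immediately from that of $r_1$ or $r_2$. Otherwise $p \in H^+ \setminus L$ and $p' \in H^- \setminus L$; if $L$ is axis-parallel this is impossible because strictly opposite sides of such a line differ by at least two in the normal coordinate. So $L$ must be slanted, and then the adjacency forces $p$ and $p'$ to be diagonal neighbors, with the other two corners $q, q'$ of the $2 \times 2$ unit square they span both lying on $L$ and $c_2$-adjacent to $p$, $p'$, and each other.

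The main obstacle is establishing $r(p) \adjeq_{c_2} r(p')$ in this slanted cross-$L$ diagonal case. My plan is to pull from the proof of Theorem~\ref{cvxRetractOfRectExtension} the explicit description of $r_i$ in the band of points immediately adjacent to $L$: such a point is mapped by $r_i$ to the image under $r_i$ of one of the two closest integer points of $L$ beside it. This forces $r_1(p) \in \{r_1(q), r_1(q')\}$ and $r_2(p') \in \{r_2(q), r_2(q')\}$. Combining the equalities $r_1(q) = r_2(q)$ and $r_1(q') = r_2(q')$ with the $c_2$-continuity of $r_1$ along $L$ (which, since $q \adj_{c_2} q'$, gives $r_1(q) \adjeq_{c_2} r_1(q')$ inside $\sigma$), one traces through the resulting chain to conclude $r(p) \adjeq_{c_2} r(p')$. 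With continuity in hand, $r$ is a $c_2$-retraction of $\Z^2$ onto the finite set $X$, and Lemma~\ref{ZnSuffices} delivers the AFPP of $(X, c_2)$, completing both assertions of the theorem.
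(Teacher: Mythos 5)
Your proposal is correct and follows essentially the same route as the paper: obtain retractions $r_i:\Z^2\to X_i$ from Theorem~\ref{cvxRetractOfRectExtension} with $L$ (the line of separation containing $\sigma$) as a sandwich line, glue them along $L$ using the shared nearest-point-of-$\sigma$ rule, and conclude via Lemma~\ref{ZnSuffices}. The only difference is one of detail: the paper simply asserts that the glued map is well defined and $c_2$-continuous, whereas you explicitly isolate and address the one nontrivial case (a slanted $L$ admitting $c_2$-adjacent points strictly on opposite sides), which is a worthwhile elaboration rather than a departure.
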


\begin{proof}
(See Figure~\ref{fig:separates}.)
Let $L$ be a line of separation of $X_1$ and $X_2$. 
We have $\sigma \subset L$. 
Let $H_i$ be the half-plane determined by $L$ containing $X_i$.
By Theorem~\ref{cvxRetractOfRectExtension}, there exist retractions $r_i: \Z^2 \to X_i$
such that $r_1(H_2)=r_2(H_1)=\sigma$, and
$r_i$ retracts $L_1$ to $\sigma$ for $i \in \{1,2\}$ such that 
\begin{equation}
\label{2bRetract}
          \mbox{$x \in L \setminus \sigma$ implies $r_i(x)$ is the unique nearest point of $\sigma$ to $x$}.
\end{equation}
The function $r: \Z^2 \to X$ defined by
$r(p)= r_i(p)$ for $p \in H_i$,
is, by~(\ref{2bRetract}), well defined and $c_2$-continuous. Thus $r$ is a retraction of $\Z^2$ to $X$.
It follows from Lemma~\ref{ZnSuffices} that $X$ has the AFPP.
\end{proof}

\begin{remark}
\label{notMaximal}
Theorem~\ref{maxSeg} does not cover the case in which $\sigma = X_1 \cap X_2 \subset L$ and
$\sigma$ is not a maximal segment of both $S_1$ and $S_2$, as in Figure~\ref{fig:tee}. In fact, the
argument given for Theorem~\ref{maxSeg} does not generally work in such a case; see Example~\ref{teeExl}.
\end{remark}

\begin{figure}
    \centering
    \includegraphics[height=2in]{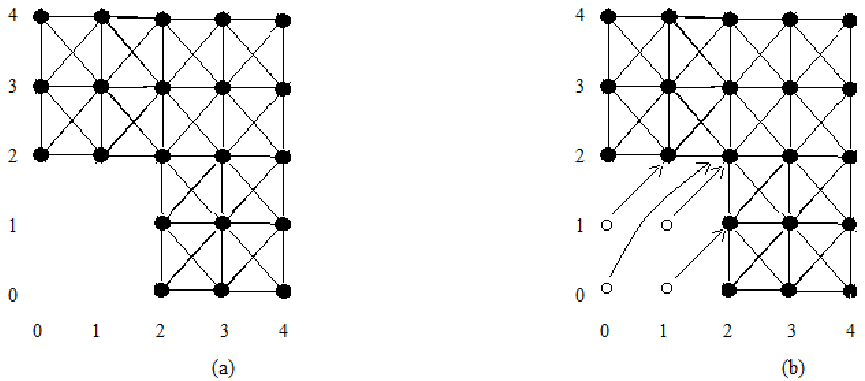}
    \caption{(a) An image $X$ illustrating Remark~\ref{notMaximal} and Example~\ref{teeExl}. Let $X=X_1 \cup X_2$, where
    $X_1 = [0,4]_{\Z} \times [2,4]_{\Z}$, $X_2 = [2,4]_{\Z} \times [0,2]_{\Z}$, $L$ is the
    line $y=2$. We have $X_1 \cap X_2 = \{(2,2), (3,2), (4,2)\}$ is not a maximal segment
    of the bounding curve of $X_1$.\newline
    (b) Arrows show $R(P)$ for $P \not \in X$, for a $c_2$-retraction 
    $R: [0,4]_{\Z} \times [0,4]_{\Z} \to X$.
       }
    \label{fig:tee}
\end{figure}

\begin{exl}
\label{teeExl}
For $X=X_1 \cup X_2$, where
\[ X_1 = [0,4]_{\Z} \times [2,4]_{\Z}, ~~~ X_2 = [2,4]_{\Z} \times [0,2]_{\Z}
\]
(see Figure~\ref{fig:tee}(a)), $r$ of Theorem~\ref{maxSeg} is not well defined at the 
point $(0,2)$, since $r_1(0,2) = (0,2) \in X_1 \setminus X_2$ and $r_2(0,2) \in X_2$. 
However, we can show $(X,c_2)$
has the AFPP as follows. Define $R: [0,4]_{\Z} \times [0,4]_{\Z} \to X$ by
\[ R(P) = \left \{ \begin{array}{ll}
          P & \mbox{if } P \in X; \\
          (1,2) & \mbox{if } P = (0,1); \\
          (2,2) & \mbox{if } P \in \{(0,0), (1,1)\}; \\
          (2,1) & \mbox{if } P = (1,0) \\
         \end{array} \right .
\]
(see  Figure~\ref{fig:tee}(b)).
It is easily seen that $R$ is a $c_2$-retraction of $[0,4]_{\Z} \times [0,4]_{\Z}$
to $X$, so the assertion follows from Theorem~\ref{haveAFPP}, assertions (3) and (6).
\end{exl}

One might ask if Theorem~\ref{maxSeg} extends to unions of more than 2 convex disks $X_i$, such that
$\sigma_{i-1}=X_{i-1} \cap X_i$ is a maximal segment of both $S_{i-1}$ and $S_i$, where $S_i$
is a minimal bounding curve of $X_i$. The following shows that such an
extension is not generally valid.

\begin{figure}
    \centering
    \includegraphics[height=3in]{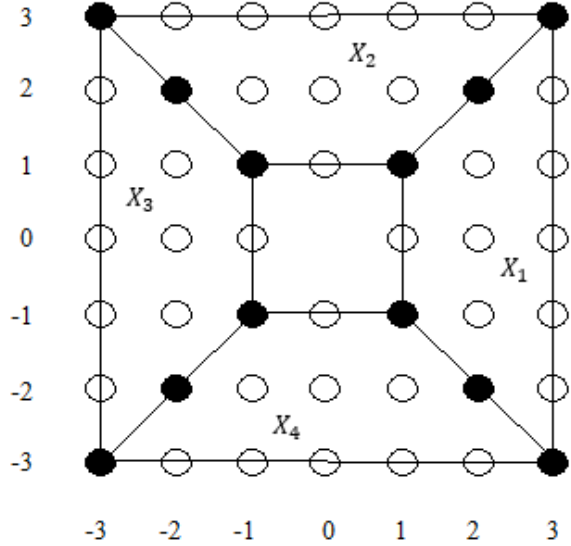}
    \caption{The digital image $X$ of Example~\ref{annulusExl}. Most adjacencies are not
     shown in order to clarify the $X_i$. Points $(x,y) \in X$ such that $|x| = |y|$ are shown in
     solid circles; these are points at which two of the $X_i$ intersect.
       }
    \label{fig:annulus3}
\end{figure}

\begin{exl}
\label{annulusExl}
Let $X = [-3,3]_{\Z}^2 \setminus \{(0,0)\} = \bigcup_{i=1}^4 X_i$, where
\[ X_1 = \{(x,y) \in X~|~ 1 \le x \le 3, -x \le y \le x \}, \]
\[   X_2 = \{(x,y) \in X~|~ 1 \le y \le 3, ~-y \le x \le y\}, \]
\[ X_3 = \{(x,y) \in X~|~ -3 \le x \le -1, ~x \le y \le -x \}, \]
\[   X_4 = \{(x,y) \in X~|~-3 \le y \le -1, ~y \le x \le -y \}. \]
See Figure~\ref{fig:annulus3}.
Then each $X_i$ is a convex disk, and each pair $(X_1, X_2)$, $(X_2, X_3)$, $(X_3,X_4)$,
$(X_4,X_1)$ intersects in a common edge of bounding curves for both members of the pair. Then
$(X,c_2)$ does not have the AFPP.
\end{exl}

\begin{proof}
Let $U = \{(x,y) \in X~|~ |x| = 1 \mbox{ or } |y| = 1 \}$. ($U$ is the ``inner ring" of $X$.)
Let $r: X \to U$ be defined for $P=(x,y)$ by
\[ r(P) = \left \{ \begin{array}{ll}
    (1,-1) & \mbox{ if } P \in X_1, ~y \le -1; \mbox{ or if } P \in X_4, ~x \ge 1;\\
    (1,y) & \mbox{ if } P \in X_1, ~-1 \le y \le 1; \\
    (1,1) & \mbox{ if } P \in X_1, ~y \ge 1; \mbox{ or if } P \in X_2, ~x \ge 1; \\
    (x,1) & \mbox{ if } P \in X_2, ~-y \le x \le y; \\
    (-1,1) & \mbox{ if } P \in X_2, ~x \le -1; \mbox{ or if } P \in X_3, ~y \ge 1;\\
    (-1,y) & \mbox{ if } P \in X_3, ~-1 \le y \le 1; \\
    (-1,-1) & \mbox{ if } P \in X_3, ~y \le -1; \mbox{ or if } P \in X_4, ~x \le -1; \\
    (x,-1) & \mbox{ if } P \in X_4, ~-1 \le x \le 1.
\end{array} \right .
\]

It is easy to see that $r$ is well-defined and $c_2$-continuous, hence is a $c_2$-retraction
of $X$ to $U$. By Example~\ref{SCC4notAFPP}, we know $(U,c_2)$ does not have the AFPP. It follows from
Theorem~\ref{haveAFPP}(3) that $(X,c_2)$ does not have the AFPP.
\end{proof} 

\section{Wedges of convex images}
\label{wedgeSection}
In this section, we obtain a result somewhat similar to Theorem~\ref{maxSeg}
by showing that the wedge of digital images in $(\Z^2, c_2)$ has the AFPP.
 
\begin{lem}
\label{wedgePtAtEndpt}
Let $(X,c_2) = (X_1,c_2) \vee (X_2,c_2) \subset \Z^2$, where $X_1$ and $X_2$ are convex disks.
Let $x_0$ be the wedge point. Then $x_0$ must be an endpoint of edges of both $X_1$ and $X_2$.
\end{lem}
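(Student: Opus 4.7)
The plan is to argue by contradiction: suppose $x_0$ is not an endpoint of any edge of $X_1$, and derive a violation of the wedge's adjacency condition; by the same argument with the roles of $X_1$ and $X_2$ swapped, the full conclusion then follows. Let $S_1$ be a minimal bounding curve of $X_1$. Since $x_0$ is not an endpoint of an edge of $S_1$, either (A) $x_0 \in Int(S_1)$, or (B) $x_0 \in S_1$ lies in the interior of some edge $e$ of $S_1$. Because $X_2$ is a $c_2$-connected digital disk with at least two points, there is some $z \in X_2 \setminus \{x_0\}$ with $z \adj_{c_2} x_0$, and $X_1 \cap X_2 = \{x_0\}$ forces $z \notin X_1$.

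In Case (A), every $c_1$-neighbor of $x_0$ lies in $X_1$: each is either in $S_1$, or else lies in $\Z^2 \setminus S_1$ and is $c_1$-adjacent to $x_0 \in Int(S_1)$, hence in the same $c_1$-component $Int(S_1)$. Therefore $z$ cannot be $c_1$-adjacent to $x_0$, so $z$ is a strict diagonal neighbor; WLOG $z = x_0 + (1, 1)$. Then $w := x_0 + (1, 0) \in X_1 \setminus \{x_0\}$ is $c_1$-adjacent to $z$, producing two $c_2$-adjacent points of $X$ with $w \in X_1 \setminus X_2$ and $z \in X_2 \setminus X_1$, a direct wedge violation.

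In Case (B), I would first apply the separating hyperplane theorem to the convex planar sets $hull(X_1)$ and $hull(X_2)$, which meet only at $x_0$. Because $x_0$ is interior to the edge $e$, the unique supporting line of $hull(X_1)$ at $x_0$ is the line $\ell$ carrying $e$; hence $\ell$ is a separating line, and $X_2$ lies in the closed half-plane $H$ opposite $X_1$. An argument analogous to Case (A) rules out $x_0 \in Int(S_2)$ (it would produce a $c_1$-neighbor of $x_0$ inside $X_2$ on the $X_1$-side of $\ell$). It also rules out $x_0$ lying in the interior of an edge of $S_2$: by uniqueness of the supporting line of $hull(X_2)$ at $x_0$ in that case, such an edge would have to lie along $\ell$, but its first lattice step from $x_0$ is a lattice neighbor of $x_0$ on $e$, contradicting $X_1 \cap X_2 = \{x_0\}$. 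Consequently, $x_0$ is a vertex of $S_2$ whose two incident edges point from $x_0$ in distinct strict directions into $H$, each with slope in $\{0, \infty, \pm 1\}$.

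The final step is a direct enumeration of these pairs of directions in two representative subcases: $e$ horizontal and $e$ slanted of slope $-1$ (the vertical and slope-$(+1)$ subcases follow by symmetry). After translating so $x_0 = (0, 0)$, the admissible edge directions at $x_0$ reduce to $\{(0, 1), (1, 1), (-1, 1)\}$ in the horizontal subcase and to $\{(1, 0), (0, 1), (1, 1)\}$ in the slanted subcase. Each of the three unordered pairs of distinct directions in either subcase forces two specific lattice points beside $x_0$ into $X_2$, and in every pair at least one of these is $c_2$-adjacent to a lattice neighbor of $x_0$ on $e \setminus \{x_0\} \subset X_1 \setminus X_2$, again producing a wedge violation. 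I expect the main difficulty here to be the slanted-edge subcase: the ``pure-diagonal'' outside neighbor $x_0 + (1, 1)$ is not itself $c_2$-adjacent to any point of $X_1 \setminus \{x_0\}$, so the one-neighbor argument from Case (A) fails, and the proof must exploit the two-edge vertex structure of $S_2$ at $x_0$ to force $X_2$ to also contain a second, more problematic neighbor such as $x_0 + (1, 0)$ or $x_0 + (0, 1)$.
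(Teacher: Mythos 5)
Your strategy is, at bottom, the same as the paper's: argue by contradiction and exhibit two $c_2$-adjacent points $p \in X_1 \setminus \{x_0\}$ and $q \in X_2 \setminus \{x_0\}$, violating the wedge condition. The paper's own proof is a four-sentence sketch that treats only the case in which $x_0$ lies in the interior of a boundary edge of $X_1$ and is a vertex of $X_2$ (running over the possible interior angles $45^{\circ}$, $90^{\circ}$, $135^{\circ}$ of $X_2$ at $x_0$). Your Case (A) ($x_0 \in Int(S_1)$) is complete and correct and fills a genuine omission, as does your justification that $x_0$ must actually be a vertex of $S_2$. Your closing observation --- that in the slanted subcase the pure diagonal neighbor $x_0+(1,1)$ is the only ``safe'' neighbor, so one must use the fact that the closed curve $S_2$ passes through $x_0$ and therefore forces a second, distinct neighbor of $x_0$ into $X_2$ --- is exactly the right key point, and is what the paper's angle enumeration implicitly relies on.

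The one step I would push back on is the separating-line step in Case (B). You assert that $hull(X_1)$ and $hull(X_2)$ ``meet only at $x_0$'' and invoke the separating hyperplane theorem, but disjointness of the hulls away from $x_0$ does not follow from $X_1 \cap X_2 = \{x_0\}$ without further argument: two lattice polygons can overlap in a two-dimensional region containing no additional lattice points, so as written this step assumes something close to what is being proved. Fortunately the separation is dispensable. The inward $c_1$-neighbors of $x_0$ (inward relative to the edge $e$) already lie in $X_1$ and are therefore unavailable to $X_2$; every remaining $c_2$-neighbor of $x_0$ other than the single pure diagonal one is $c_2$-adjacent to a point of $e \setminus \{x_0\}$ or to one of those inward neighbors, each in $X_1 \setminus \{x_0\}$; and since $x_0$ lies on the closed curve $S_2$ it has two distinct neighbors in $S_2 \subset X_2$, at most one of which can be the safe diagonal. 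That yields the contradiction with no appeal to hulls. One further caveat, which applies equally to the paper's proof: if degenerate ``disks'' are admitted (e.g., the two-point slanted segment $X_2 = \{x_0, x_0+(1,1)\}$), the forced-second-neighbor step fails and the conclusion for $X_1$ is actually false, so the argument tacitly assumes $S_2$ is a closed curve with at least three points.
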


\begin{proof}
Suppose $X_1 \cap X_2 = \{x_0\}$ where $x_0$ is in a boundary edge $\sigma$ of, say, $X_1$ but is not
an endpoint of $\sigma$. Then, whether the interior angle of the convex disk $X_2$ at $x_0$ measures
$45^{\circ}$ ($\pi /4$ radians), $90^{\circ}$ ($\pi /2$ radians), or $135^{\circ}$ ($3 \pi /4$ radians),
there must be points $x_i \in X_i$ such that $x_1 \neq x_0 \neq x_2$ and
$x_1$, $x_0$, and $x_2$ are pairwise $c_2$-adjacent. Therefore, 
$(X_1 \cup X_2,c_2) \neq (X_1,c_2) \vee (X_2,c_2)$. This is contrary to hypothesis, so the assertion is established.
\end{proof}

\begin{prop}
\label{separation}
Let $X \subset \Z^2$ be such that $(X,c_2) = (X_1 \vee X_2, c_2)$, where $X_1$ and $X_2$ are convex disks.
Then there is a digital line $L \subset \Z^2$ that is a line of separation of $X_1$ and $X_2$.
\end{prop}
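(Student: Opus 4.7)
My plan is to exploit the rigidity that the wedge condition imposes at $x_0$ in order to pin down the local geometry of $X_1$ and $X_2$ almost uniquely, after which a separating digital line through $x_0$ is visible. By Lemma~\ref{wedgePtAtEndpt}, $x_0$ is a vertex of each of the convex polygons $hull(X_1)$ and $hull(X_2)$, so two edges of $X_i$ meet at $x_0$. By Remark~\ref{segSlope} these edges are axis parallel or slanted, so each leaves $x_0$ in one of the eight compass directions, and the interior angle $\theta_i$ of $X_i$ at $x_0$ lies in $\{45^{\circ},90^{\circ},135^{\circ}\}$. Consequently the set $A_i$ of $c_2$-neighbors of $x_0$ that lie in $X_i \setminus \{x_0\}$ forms a contiguous arc on the circle of eight compass directions of angular span exactly $\theta_i$.

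Next I would translate the wedge hypothesis into a combinatorial constraint on $A_1$ and $A_2$. Because $X_1 \cap X_2 = \{x_0\}$, the arcs $A_1$ and $A_2$ are disjoint. Two distinct $c_2$-neighbors of $x_0$ are $c_2$-adjacent to each other iff the angle they subtend at $x_0$ is $45^{\circ}$ or $90^{\circ}$, so forbidding any $c_2$-adjacency between $X_1 \setminus \{x_0\}$ and $X_2 \setminus \{x_0\}$ forces each of the two arcs (``gaps'') separating $A_1$ from $A_2$ on the circle to span at least $135^{\circ}$. A total count then yields
\[
360^{\circ} \;=\; \theta_1 + \theta_2 + \mathrm{gap}_1 + \mathrm{gap}_2 \;\geq\; \theta_1 + \theta_2 + 270^{\circ},
\]
so $\theta_1 = \theta_2 = 45^{\circ}$ and both gaps equal $135^{\circ}$. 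Writing the two edges of $X_1$ at $x_0$ as pointing in directions $\alpha$ and $\alpha + 45^{\circ}$ for some $\alpha$ that is a multiple of $45^{\circ}$, the edges of $X_2$ at $x_0$ must then point in the antipodal directions $\alpha + 180^{\circ}$ and $\alpha + 225^{\circ}$.

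With this rigid local picture I take $L$ to be the digital line through $x_0$ whose slope corresponds to direction $\alpha + 90^{\circ}$ modulo $180^{\circ}$; since $\alpha$ is a multiple of $45^{\circ}$, $L$ is horizontal, vertical, or of slope $\pm 1$, hence is a legitimate digital line. Because $X_i$ is a convex polygon with vertex $x_0$, the whole set $X_i$ is contained in the closed cone at $x_0$ spanned by its two edges there. A direct check with the normal $(\cos \alpha, \sin \alpha)$ to $L$ shows that the cone of directions $[\alpha, \alpha + 45^{\circ}]$ lies strictly in one open half-plane of $L$ while the cone $[\alpha + 180^{\circ}, \alpha + 225^{\circ}]$ lies strictly in the opposite one. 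Hence $X_1 \setminus \{x_0\}$ and $X_2 \setminus \{x_0\}$ sit on opposite sides of $L$ with $X \cap L = \{x_0\}$, exhibiting $L$ as a line of separation of $X_1$ and $X_2$.

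The main obstacle I anticipate is the angular accounting in the second paragraph: one has to identify precisely which pairs of $c_2$-neighbors of $x_0$ are themselves $c_2$-adjacent (the pairs subtending $45^{\circ}$ or $90^{\circ}$) and then see that the resulting per-gap lower bound of $135^{\circ}$ really does force both interior angles at $x_0$ to equal $45^{\circ}$ with antipodal orientations; once this is established, the remaining verification is a short trigonometric check.
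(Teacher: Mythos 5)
Your overall strategy---classify the local configurations at the wedge point $x_0$ by the interior angles of $X_1$ and $X_2$ there, then exhibit a separating line in each feasible configuration---is the same as the paper's. However, your angular accounting rests on a false claim, and as a result your classification (and hence your construction of $L$) misses cases that genuinely occur.

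The false step is the assertion that two distinct $c_2$-neighbors of $x_0$ are $c_2$-adjacent to each other if and only if they subtend $45^{\circ}$ or $90^{\circ}$ at $x_0$. Taking $x_0=(0,0)$, the points $(1,1)$ and $(-1,1)$ subtend $90^{\circ}$ but differ by $(2,0)$, so they are not $c_2$-adjacent; likewise $(1,1)$ and $(1,-1)$. A pair of neighbors $90^{\circ}$ apart is adjacent exactly when both are axis-direction neighbors, and never when both are diagonal-direction neighbors. Consequently a gap of only $90^{\circ}$ whose two endpoints are diagonal neighbors is permitted, your inequality $360^{\circ}\ge\theta_1+\theta_2+270^{\circ}$ fails, and the conclusion $\theta_1=\theta_2=45^{\circ}$ is false. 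Concretely, let $X_1=\{(x,y)\in\Z^2 : |x|\le y\le n\}$ and $X_2=-X_1$: each is a convex disk with a $90^{\circ}$ interior angle at the origin, every point of $X_1\setminus\{(0,0)\}$ has $y\ge 1$ and every point of $X_2\setminus\{(0,0)\}$ has $y\le -1$, so $X=X_1\vee X_2$ is a legitimate wedge (this is the configuration of Figure~\ref{fig:90-90}(b); Figure~\ref{fig:45-90}(b) similarly realizes a $45^{\circ}$--$90^{\circ}$ wedge). Your proof constructs $L$ only in the antipodal $45^{\circ}$--$45^{\circ}$ case and therefore does not cover these configurations, even though a separating line does exist in each of them (the line $y=0$ in the example above). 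The same error also makes your ``antipodal orientation'' conclusion too strong even when both angles are $45^{\circ}$: e.g., $A_1=\{(1,0),(1,1)\}$ and $A_2=\{(-1,0),(-1,1)\}$ is a feasible non-antipodal configuration. To repair the argument you need the corrected adjacency rule, which yields the feasible angle pairs $45^{\circ}$--$45^{\circ}$, $45^{\circ}$--$90^{\circ}$, and $90^{\circ}$--$90^{\circ}$ (the latter two only with suitably oriented diagonal edges bounding the gaps), together with a separating line for each---which is essentially the case analysis the paper carries out in Figures~\ref{fig:45-45}--\ref{fig:90-90}.
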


\begin{proof}
Since the edges of a convex disk must be horizontal, vertical, or slanted (having slopes of $\pm 1$),
interior angles formed by edges of the disk must measure
$45^{\circ}$ ($\pi / 4$ radians), $90^{\circ}$ ($\pi / 2$ radians), or $135^{\circ}$ ($3 \pi / 4$ radians). 
Let $(X,c_2) = (X_1,c_2) \vee (X_2,c_2) \subset \Z^2$ where $X_1$ and $X_2$ are convex disks, and
$x_0$ is the wedge point. 

\begin{figure}
    \centering
    \includegraphics[height=1in]{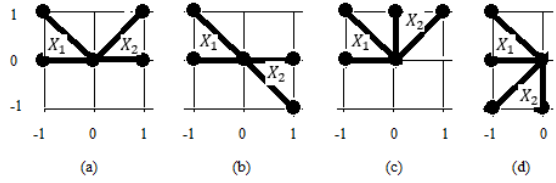}
    \caption{Up to isomorphism, all possibilities are shown for $N^*(X,c_2,(0,0))$ for $X = X_1 \cup X_2$,
     where $X_1$ and $X_2$ are convex disks, $X_1 \cap X_2 = \{(0,0)\}$,
     and both interior angles are $45^{\circ}$ ($\pi / 4)$ radians). Some adjacency lines not shown.
     Subfigures (a), (d), and (e) are not appropriate to  $(X,c_2)$ as $(X_1 \vee X_2, c_2)$, since
     each has points $p \in X_1$, $q \in X_2$ such that $p \neq (0,0) \neq q$ and $p \adj_{c_2} q$.
    Subfigures (b) and (c) are appropriate to  $(X,c_2)$ as $(X_1 \vee X_2, c_2)$.
    (Not meant to be understood as all of $X$.)
       }
    \label{fig:45-45}
\end{figure}

Figure~\ref{fig:45-45}, subfigures (b) and (c), show up to isomorphism the only ways in which
$X_1$ and $X_2$ can meet at a point $x_0$ at which both have 
$45^{\circ}$ ($\pi / 4$ radians) interior angles
in $X_1 \vee X_2$. Clearly these configurations permit a line of separation $L$.
Subfigures (a), (d), and (e) show other ways in which 
$X_1$ and $X_2$ can have a one-point intersection at which both have
$45^{\circ}$ ($\pi / 4$ radians) interior angles, but not
in $X_1 \vee X_2$, since there exist $x_1 \in X_1$, $x_2 \in X_2$ such that $x_1 \adj x_2$ and
$x_1 \neq x_0 \neq x_2$.

\begin{figure}
    \centering
    \includegraphics[height=1in]{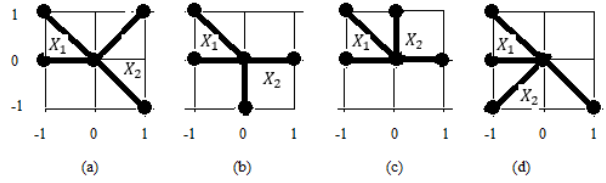}
    \caption{Up to isomorphism, all possibilities are shown for $N^*(X,c_2,(0,0))$ for $X = X_1 \cup X_2$,
     where $X_1$ and $X_2$ are convex disks, $X_1 \cap X_2 = \{(0,0)\}$,
     and the interior angles are $45^{\circ}$ ($\pi / 4)$ radians) and $90^{\circ}$ ($\pi / 2$ radians).
     Some adjacency lines not shown.
     Subfigures (a), (c), and (d) are not appropriate to $(X,c_2)$ as $(X_1 \vee X_2, c_2)$
     , since each has points $p \in X_1$, $q \in X_2$ such that $p \neq (0,0) \neq q$
     and $p \adj_{c_2} q$. Subfigure (b) is appropriate to $(X,c_2)$ as $(X_1 \vee X_2, c_2)$.
     (Not meant to be understood as all of $X$.)
       }
    \label{fig:45-90}
\end{figure}

Figure~\ref{fig:45-90}(b) shows up to isomorphism the only way in which
$X_1$ and $X_2$ can meet in a single point (at $x_0$) at which one has a $45^{\circ}$ ($\pi / 4$ radians) 
interior angle and the other has a $90^{\circ}$ ($\pi / 2$ radians) interior angle 
in $X_1 \vee X_2$. Clearly this configuration permits a line of separation $L$.
Subfigures (a), (c), and (d) show other ways in which 
$X_1$ and $X_2$ can meet at a single point at which one has a $45^{\circ}$ ($\pi / 4$ radians) 
interior angle and the other has a $90^{\circ}$ ($\pi / 2$ radians) interior angle, but not
in $X_1 \vee X_2$, since in each of these configurations
there exist $x_1 \in X_1$, $x_2 \in X_2$ such that $x_1 \adj x_2$ and
$x_1 \neq x_0 \neq x_2$.

\begin{figure}
    \centering
    \includegraphics[height=1in]{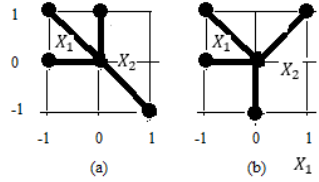}
    \caption{Up to isomorphism, all possibilities are shown for $N^*(X,c_2,(0,0))$ for $X = X_1 \cup X_2$,
     where $X_1$ and $X_2$ are convex disks, $X_1 \cap X_2 = \{(0,0)\}$,
     and the interior angles are $45^{\circ}$ ($\pi / 4$ radians) and $135^{\circ}$ ($3 \pi / 4$ radians).
     Some adjacency lines not shown.  None of these is suitable for $(X,c_2)$ as $(X_1 \vee X_2, c_2)$, as each
     has $p \in X_1$ and $q \in X_2$  such that $p \neq (0,0) \neq q$ and $p \adj_{c_2} q$.
     (Not meant to be understood as all of $X$.)
       }
    \label{fig:45-135}
\end{figure}

Figure~\ref{fig:45-135} illustrates that there is no way
for $X_1$ and $X_2$ to meet at a wedge point for $X_1 \vee X_2$ at which one has
a $45^{\circ}$ ($\pi / 4$ radians) interior angle and the other has a $135^{\circ}$ ($3 \pi / 4$ radians) interior angle.

Figure~\ref{fig:90-90}, subfigures (a) and (b), show up to isomorphism the only ways in which
$X_1$ and $X_2$ can meet in a single point (at $x_0$) at which both have a 
$90^{\circ}$ ($\pi / 2$ radians) interior angle in $X_1 \vee X_2$. 
Clearly these configurations permit a line of separation $L$.

It is easily seen that in $X_1 \vee X_2$ we cannot have, at the wedge point $x_0$, 
one of $X_1$ and $X_2$ with an interior angle
of $90^{\circ}$ ($\pi / 2$ radians) and the other with an interior angle of $135^{\circ}$ ($3 \pi / 4$ radians);
nor can we have both of $X_1$ and $X_2$ with interior angles of $135^{\circ}$ ($3 \pi / 4$ radians). The assertion follows.
\end{proof}

\begin{figure}
    \centering
    \includegraphics[height=1in]{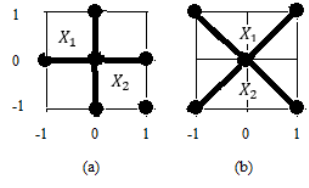}
    \caption{Up to isomorphism, the only possibilities are shown for $N^*(X,c_2,(0,0))$ for $X = X_1 \cup X_2$,
     where $X_1$ and $X_2$ are convex disks, $X_1 \cap X_2 = \{(0,0)\}$,
     and the interior angles are both $90^{\circ}$ ($\pi / 2$ radians).
     Some adjacency lines not shown.  (a) is not suitable for $(X,c_2)$ as $(X_1 \vee X_2, c_2)$, as, e.g.,
     $p=(0,1) \in X_1$ and $q=(1,0) \in X_2$ are such that $p \neq (0,0) \neq q$ and $p \adj_{c_2} q$.
     (b) is suitable for $(X,c_2) = (X_1 \vee X_2, c_2)$.
    (Not meant to be understood as all of $X$.)
           }
    \label{fig:90-90}
\end{figure}

\begin{thm}
\label{wedgeRetract}
Let $X_1, X_2  \subset \Z^2$ be digitally convex disks. \newline 
Let $(X,c_2) = (X_1 \vee X_2,c_2)$. Then
\begin{itemize}
    \item there is a $c_2$-retraction $r: \Z^2 \to X$; and
    \item $(X,c_2)$ has the AFPP.
\end{itemize}
\end{thm}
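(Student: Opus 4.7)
The plan is to construct $r$ by gluing together two retractions supplied by Theorem~\ref{cvxRetractOfRectExtension}, one onto $X_1$ and one onto $X_2$. First I apply Proposition~\ref{separation} to obtain a line of separation $L$ for $X_1$ and $X_2$. Crucially, I would choose $L$ to be \emph{axis-parallel}: inspection of the admissible subfigures of Figures~\ref{fig:45-45}, \ref{fig:45-90}, and \ref{fig:90-90} that are enumerated in the proof of Proposition~\ref{separation} shows that in every valid configuration the two cones of $X_1$ and $X_2$ at the wedge point $x_0$ lie on opposite sides of some horizontal or vertical line through $x_0$. Because each $X_i$ is convex with $x_0$ as a vertex (Lemma~\ref{wedgePtAtEndpt}), $X_i$ is contained in its cone, and the chosen axis-parallel $L$ therefore separates $X_1$ and $X_2$ globally. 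Let $H_1$ and $H_2$ be the closed half-planes bounded by $L$, with $X_i \subset H_i$.

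Next, I apply Theorem~\ref{cvxRetractOfRectExtension} to each $X_i$, taking $L$ as one of its two parallel sandwich lines (any suitable parallel line on the far side of $X_i$ serves as the other). Since $x_0$ is a vertex of $X_i$ and $L \cap X_i = \{x_0\}$, we have $L \cap S_i = \{x_0\}$ for a minimal bounding curve $S_i$, so the theorem produces a $c_2$-retraction $r_i : \Z^2 \to X_i$ that collapses the opposite half-plane $H_j$ (for $j \neq i$) onto the single point $x_0$. I then define $r : \Z^2 \to X$ by $r = r_1$ on $H_1$ and $r = r_2$ on $H_2$; the two formulas agree on $L = H_1 \cap H_2$ (both produce $x_0$), so $r$ is well-defined.

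The $c_2$-continuity of $r$ reduces to three cases. Adjacent pairs $p \adj_{c_2} p'$ lying entirely in $H_1$, or entirely in $H_2$, inherit continuity from $r_1$ or $r_2$ respectively. The remaining potential case, $p \in H_1 \setminus L$ with $p' \in H_2 \setminus L$ and $p \adj_{c_2} p'$, cannot occur when $L$ is axis-parallel: such a pair would differ by at least $2$ in the coordinate normal to $L$. Hence $r$ is a $c_2$-retraction of $\Z^2$ onto the finite set $X$, and Lemma~\ref{ZnSuffices} then yields the AFPP. The main obstacle will be the axis-parallel selection of $L$: had $L$ been slanted of slope $\pm 1$, diagonal pairs such as $(k, k+1) \in H_1$ and $(k+1, k) \in H_2$ would be $c_2$-adjacent across $L$, and the retractions produced by Theorem~\ref{cvxRetractOfRectExtension} would generically send them to non-adjacent images, so the naive glueing would fail to be continuous and a more delicate construction would be required.
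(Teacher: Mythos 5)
Your argument is correct and follows the same route as the paper's own proof: obtain a line of separation $L$ from Proposition~\ref{separation}, use Theorem~\ref{cvxRetractOfRectExtension} to build retractions $r_i:\Z^2\to X_i$ that collapse the half-plane opposite $X_i$ to the single point $\{x_0\}=L\cap S_i$, glue them along $L$, and invoke Lemma~\ref{ZnSuffices}. The one substantive point where you go beyond the paper is your insistence that $L$ be chosen axis-parallel, justified by checking that every admissible wedge configuration in Figures~\ref{fig:45-45}, \ref{fig:45-90}, and \ref{fig:90-90} places the two cones at $x_0$ on opposite sides of a horizontal or vertical line; convexity then carries this to all of $X_1$ and $X_2$. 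This is not idle pedantry: as you observe, for a slanted $L$ there exist $c_2$-adjacent pairs with one point strictly interior to each half-plane (e.g., $(k,k+1)$ and $(k+1,k)$ across $y=x$), and for such pairs the continuity of the glued map does not follow from $r_1(H_2)=\{x_0\}=r_2(H_1)$ alone. The paper's proof simply asserts that $r$ is well-defined and $c_2$-continuous from equations~(\ref{comeTogether}) and~(\ref{mapClose}) without distinguishing these cases, so your extra step makes explicit a verification the published argument leaves implicit.
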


\begin{proof}
Let $x_0$ be the wedge point of $X$. By Lemma~\ref{wedgePtAtEndpt}, $x_0$ is an endpoint of an edge of
$X_1$ and of an edge of $X_2$. Let $S_2$ be a minimal bounding curve for $X_2$.

By Proposition~\ref{separation}, there is a digital line $L_1 \subset \Z^2$
that is a line of separation of $X_1$ and $X_2$.
By Theorem~\ref{cvxRetractOfRectExtension}, there exist retractions $r_i: \Z^2 \to X_i$ such that
\begin{equation}
\label{comeTogether}
    r_1(H_2) = \{x_0\}=r_2(H_1),
\end{equation}
where $H_i$ is the half-plane determined by $L_1$ containing $X_i$, and
$r_2|_H$ retracts $H$ to $L_2 \cap X_2$, where $H$ is the half-plane determined by $L_2$ not containing $X_2$, such that
\begin{equation}
\label{mapClose}
    \mbox{if $x \in L_i \setminus S$ then $r_i(x)$ is the unique closest point of $L_i \cap S$ to $x$.}
\end{equation}
The function $r: \Z^2 \to X$ given by
\[ r(y) = \left \{ \begin{array}{ll}
   r_1(y)  & \mbox{if } y \in H_1; \\
   r_2(y)  & \mbox{if } y \in H_2,
\end{array} \right .
\]
is, by~(\ref{comeTogether}) and~(\ref{mapClose}), well-defined and $c_2$-continuous. Hence $r$ is
a retraction of $\Z^2$ to $X$, with $r|_H$ retracting $H$ to $L_2 \cap X_2$. 
Then $r|_R$ is a retraction of $R$ to $X$. 
It follows from Lemma~\ref{ZnSuffices} that $(X,c_2)$ has the AFPP. 
\end{proof}

\section{Further remarks}
We have continued the work of~\cite{BxConvexity}, exploring relationships
between the convexity of digital images in~$\Z^2$ and the AFPP. In particular,
we have used the result of~\cite{BxConvexity} that convex disks in $\Z^2$
are $c_2$-retracts of digital rectangles and therefore have the AFPP for the
$c_2$-adjacency to show that certain unions of convex disks in $\Z^2$
also have the AFPP.

\section{Acknowledgment}
The suggestions and corrections of an anonymous reviewer are gratefully acknowledged.


\begin{thebibliography}{99}


\bibitem{Bx94}
L. Boxer,
Digitally continuous functions,
{\em Pattern Recognition Letters} 15 (1994), 833-839.

\bibitem{Boxer99}
L. Boxer,
A classical construction for the digital fundamental group,
{\em Pattern Recognition Letters} 10 (1999), 51-62.



 
 \bibitem{BxAFPP}
 L. Boxer, Approximate fixed point properties in digital topology, 
 {\em Bulletin of the International Mathematical Virtual Institute} 10 (2) (2020), 357-367.
 
 \bibitem{BxAFPPtreesProducts}
 L. Boxer, Approximate fixed point property for digital trees and products, 
 {\em Bulletin of the International Mathematical Virtual Institute} 10 (3) (2020), 595-602.
 
 \bibitem{BxConvexity}
  L. Boxer,
  Convexity and freezing sets in digital topology,
  {\em Applied General Topology} 22 (1) (2021), 121 - 137.
  
\bibitem{BxCvexAFPP}
L. Boxer, Convexity and AFPP in the digital plane, 
{\em Bulletin of the International Mathematical Virtual Institute} 11(3)(2021), 403 - 411.

\bibitem{BEKLL}
 L. Boxer, O. Ege, I. Karaca, J. Lopez, and J. Louwsma, Digital fixed points, 
 approximate fixed points, and universal functions, 
 {\em Applied General Topology} 17(2), 2016, 159-172. 



\bibitem{Chen94}
L. Chen, Gradually varied surfaces and its optimal uniform approximation, 
{\em SPIE Proceedings} 2182 (1994), 300-307.

\bibitem{Chen04}
L. Chen, {\em Discrete Surfaces and Manifolds}, Scientific Practical Computing, 
Rockville, MD, 2004

\bibitem{HanNon}
S-E. Han,
Non-product property of the digital fundamental group, 
{\em Information Sciences} 171 (2005), 73-91.

\bibitem{KH21}
J.M. Kang and S-E. Han,
The product property of the almost fixed point property for digital spaces,
{\em AIMS Mathematics}, 6 (7) (2021), 7215-7228.




\bibitem{Rosenfeld}
A. Rosenfeld,
`Continuous' functions on digital images,
{\em Pattern Recognition Letters} 4 (1987), 177-184.


\bibitem{Tsaur}
R. Tsaur and M. Smyth, 
{\em ``Continuous" multifunctions in discrete spaces with applications to fixed point theory},
in: G. Bertrand, A. Imiya, and R. Klette (eds.),
{\em Digital and Image Geometry, Lecture Notes in Computer Science}, vol. 2243, Springer, Berlin / Heidelberg, 2001, 151-162. 

\end{thebibliography}
\end{document}